\newtheorem{theorem}{Theorem}[section]
\newtheorem{lemma}[theorem]{Lemma}
\newtheorem{proposition}[theorem]{Proposition}
\newtheorem{definition}[theorem]{Definition}
\newtheorem{corollary}[theorem]{Corollary}
\def\s{\sigma}
\begin{document}

\title[Free lattice ordered groups and the space of left orderings]{Free lattice ordered groups and the topology on the space of left orderings of a group}

\date{\today}

\author[Adam Clay]{Adam Clay}
\address{Department of Mathematics\\
University of British Columbia \\
Vancouver \\
BC Canada V6T 1Z2} \email{aclay@math.ubc.ca}
\urladdr{http://www.math.ubc.ca/~aclay/} \maketitle

\begin{abstract}For any left orderable group $G$, we recall from work of McCleary that isolated points in the space $LO(G)$ correspond to basic elements in the free lattice ordered group $F(G)$.   We then establish a new connection between the kernels of certain maps in the free lattice ordered group $F(G)$, and the topology on the space of left orderings $LO(G)$.  This connection yields a simple proof that no left orderable group has countably many left orderings.

  When we take $G$ to be the free group $F_n$ of rank $n$, this connection sheds new light on the space of left orderings $LO(F_n)$: by applying a result of Kopytov, we show that there exists a left ordering of the free group whose orbit is dense in the space of left orderings.   From this, we obtain a new proof that $LO(F_n)$ contains no isolated points, and equivalently, a new proof that $F(F_n)$ contains no basic elements.
\end{abstract}

\section{The space of left orderings}

A group $G$ is said to be left orderable if there exists a strict total ordering $<$ of the elements of $G$, such that $g<h$  implies $fg <fh$ for all $f, g, h$ in $G$. Associated to each left ordering of a group $G$ is its positive cone defined by $P = \{ g \in G | g>1 \}$,  elements of the positive cone are said to be positive in the ordering $<$.  The positive cone $P$ of a left ordering $<$ of $G$ satisfies $P \cdot P \subset P$, and $P \sqcup P^{-1} \sqcup \{1 \} = G$;  conversely, any $P \subset G$ satisfying these two properties defines a left invariant total ordering of the elements of $G$, according to the prescription $g<h$ if and only if $g^{-1}h \in P$.   We will denote the left ordering of a group $G$ arising from a positive cone $P$ by $<_P$.

If the positive cone $P$ of a left ordering additionally satisfies $gPg^{-1} = P$ for all $g \in G$, then the associated left ordering of $G$ satisfies $g<_Ph$ implies $gf <_P hf$ and $fg <_P fh$ for all $f, g, h$ in $G$.  In this case, $<_P$ is a bi-ordering of $G$. 

Finally, a left ordering $<_P$ of $G$ is called Conradian if for every pair of positive elements $g, h \in P$, there exists an integer $n$ such that $g <_P h g^n$.  In fact, we can equivalently require that  $g <_P h g^2$ for all pairs of positive elements $g, h$ in $G$ \cite{NF07}.   Observe that all bi-orderings are also Conradian left orderings, but not vice versa.   A group $G$ is Conradian left-orderable if and only if it is locally indicable, meaning that all finitely generated subgroups surject onto the integers \cite{NF07}, \cite{RR02}, \cite{AG99}.

We define the space of left orderings of $G$ as follows.  Denote by $LO(G)$ the set of all subsets $P \subset G$ satisfying  $P \cdot P \subset P$, and $P \sqcup P^{-1} \sqcup \{1 \} = G$, so that we can think of $LO(G)$ as the set of all left orderings of $G$.  Then $LO(G)$ is a subset of the power set of $G$.  Recall that the power set $2^G$ has a natural topology, a subbasis for which is given by the sets
\[ U_g = \{ S \in 2^G | g \in S \}, 
\]
where $g$ ranges over all elements of the group $G$.  Thus, $LO(G)$ inherits a topology, a subbasis for which is given by the sets
\[ U_g = \{ P \subset G | g \in P \},
\]
so that an arbitrary basic open set in $LO(G)$ has the form 
\[ U_{g_1} \cap \cdots \cap U_{g_n} = \{ P \subset G | g_1, \cdots g_n \in P \},\]
where $g_1, \cdots , g_n$ is an arbitrary finite family of elements in $G$.  The basic open set $ U_{g_1} \cap \cdots \cap U_{g_n} \subset LO(G)$ therefore contains all positive cones containing the elements $g_1, \cdots g_n$, corresponding to all those left orderings of $G$ in which $g_1, \cdots g_n$ are positive.

The existence of one-point open sets in $LO(G)$ has been a question of some interest, being raised by Sikora in \cite{AS04}.  A one-point open set in our topology is a set of the form 
\[ \{P \} = U_{g_1} \cap \cdots \cap U_{g_n}, \]
meaning that $P$ is the unique positive cone in $G$ that contains the finite family $g_1, \cdots ,g_n$, in other words, $<_P$ is the unique left ordering of $G$ in which the elements $g_1, \cdots ,g_n $ are positive.

\section{Free lattice-ordered groups}

A group $G$ is said to be lattice-ordered, referred to as an $l$-group, if there exists a \textit{partial} ordering $<$ of the elements of $G$ satisfying:
\begin{enumerate}
\item $g<h$ implies $fg <fh$ and $gf <hf$ for all $f, g, h$ in $G$, and
\item The ordering $<$ admits a lattice structure, that is, every finite set has a least upper bound and a greatest lower bound.
\end{enumerate}
As is standard, we denote the greatest lower bound and the least upper bound of $g_1, \cdots g_n$ by $\bigwedge_{i=1}^n g_i$ and $\bigvee_{i=1}^n g_i$ respectively.   It is an easy computation to show that all lattice-ordered groups must necessarily be distributive lattices.   A homomorphism $h$ of lattice ordered groups from $L_1$ to $L_2$  (often called an $l$-homomorphism) is a map $h:L_1 \rightarrow L_2$ that is simultaneously a group homomorphism and a morphism of lattices, so that $h$ respects the partial ordering $<$, as well as distributing over all finite meets and joins.  The following two examples are standard constructions which are of great importance in what follows.

First, given a totally ordered set $\Omega$ with ordering $<$, the group of all order preserving automorphisms of $\Omega$ forms a lattice ordered group, which we denote by $Aut(\Omega, <)$.  The lattice ordering $\prec$ of $Aut(\Omega, <)$ is defined pointwise:  for any $f, g$ in $Aut(\Omega, <)$,  we declare $f \prec g$ if $f(x) < g(x)$ for all $x \in \Omega$.

As a second example, let $\{ L_i \}_{i \in I}$, be an arbitrary collection of lattice ordered groups, with $L_i$ having the lattice ordering $\prec_i$.  We can form a new lattice ordered group $L$ by setting
\[ L = \prod_{i \in I} L_i \]
and for any $x, y$ in $L$ we declare $x \prec y$ if $\pi_i(x) <_i \pi_i(y)$ for all $i \in I$.  Here, $\pi_i : L \rightarrow L_i$ is projection onto the $i$-th component in the product.

We are now ready to introduce the main object of concern in this paper.  Let $G$ be a left orderable group.  A lattice ordered group $F(G)$ is said to be the free lattice ordered group over $G$ if $F(G)$ satisfies:
\begin{enumerate}
\item  There exists an injective homomorphism $i: G \rightarrow F(G)$ such that $i(G)$ generates $F(G)$ as an $l$-group. 
\item  For any lattice ordered group $L$ and any homomorphism of groups $\phi : G \rightarrow L$ there exists a unique $l$-homomorphism $\bar{\phi} : F(G) \rightarrow L$ such that $\bar{\phi} \circ i = \phi$.
\end{enumerate}
Obviously any such group is unique up to $l$-isomorphism.

We present a construction of the free lattice ordered group $F(G)$ due to Conrad \cite{PC70}, in the case that $G$ is a left orderable group.

For each positive cone $P \in LO(G)$, the group $Aut(G, <_P)$ is a lattice ordered group, whose partial ordering we will denote by $\prec_P$.  We may embed the group $G$ into $Aut(G, <_P)$ by sending each $g \in G$ to the order-preserving automorphism of the totally ordered set $(G, <_P)$ defined by left-multiplication by $g$.  Denote this order preserving automorphism by $g_P$, so that $g_P \in Aut(G, <_P)$ has action $g_P(h) = gh$.

Define a map 
\[ i: G \rightarrow \prod_{P \in LO(G)} Aut(G, <_P), \]
according to the rule $\pi_P(i(g)) = g_P$, where $\pi_P:  \prod_{P \in LO(G)} Aut(G, <_P) \rightarrow Aut(G, <_P)$ is projection.  Thus, on each factor in the product, $g \in G$ acts by left multiplication.

Denote by $F(G)$ the smallest lattice-ordered subgroup of \[ \prod_{P \in LO(G)} Aut(G, <_P)\] containing the set $i(G)$.  Then $F(G)$, together with the map $i$ defined above, is the free lattice ordered group over the left-orderable group $G$.  The lattice ordering of $F(G)$ will be denoted by $\prec$.

Essential in showing that this construction produces a group satisfying the required universal property is the following proposition, due to Conrad \cite{PC70}.
\begin{proposition}
\label{prop:con} Let $G$ be a left orderable group, and
suppose that $x$ is any non-identity element of $F(G)$.  Then there exists $P \in LO(G)$ such that $\pi_P(x) \neq 1$. 
\end{proposition}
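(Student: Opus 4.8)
The plan is to reduce the statement to a concrete question about left orderings by putting $x$ into a normal form and computing each projection $\pi_P(x)$ explicitly. First I would use the standard lattice-group identities — the distributivity of the lattice, the rules $a(b\vee c)d = abd\vee acd$ and $a(b\wedge c)d = abd\wedge acd$, and $(b\vee c)^{-1}=b^{-1}\wedge c^{-1}$ — to rewrite $x$ as a meet of joins of group words in the generators $i(G)$. Since $i$ is a group homomorphism, every such group word is itself of the form $i(g)$, so I obtain
\[ x = \bigwedge_{r=1}^{m}\ \bigvee_{s=1}^{n_r} i(g_{rs}), \qquad g_{rs}\in G. \]
Because each $\pi_P$ is an $l$-homomorphism and because meets and joins in $Aut(G,<_P)$ are computed pointwise, the automorphism $\pi_P(x)$ is given by $\pi_P(x)(k)=\min_r\max_s g_{rs}k$, with the maximum and minimum taken with respect to $<_P$. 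Thus $\pi_P(x)=1$ if and only if $\min_r\max_s g_{rs}k=k$ for every $k\in G$.

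Next I would make two reductions. Using the absolute value $|x|=x\vee x^{-1}$, which satisfies $|x|\succeq 1$ and $|x|=1$ iff $x=1$, and noting $\pi_P(|x|)=|\pi_P(x)|$, I may assume $x\succeq 1$. Second, I would reduce the test to the single point $k=1$: because $<_P$ is left invariant and $LO(G)$ is closed under conjugation $P\mapsto kPk^{-1}$, a short computation shows $\pi_P(x)(k)=k$ if and only if $\pi_{kPk^{-1}}(x)(1)=1$. Hence $\pi_P(x)=1$ for all $P$ if and only if $\min_r\max_s g_{rs}=1$ in $<_P$ for every $P$. The proposition is therefore equivalent to the following: if $x\succ 1$ in $F(G)$, then there is a left ordering $<_P$ in which each row $\{g_{r1},\dots,g_{rn_r}\}$ contains a $<_P$-positive element, so that $\min_r\max_s g_{rs}>_P 1$.

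The main obstacle is precisely this construction of a witnessing left order, which is the genuine content of Conrad's theorem. One wants to produce $P\in LO(G)$ meeting every row; equivalently, writing $C_r=\{P : g_{rs}\le_P 1 \text{ for all } s\}$, one must show $\bigcup_r C_r\neq LO(G)$. The point is that if no such order existed, then $\min_r\max_s g_{rs}\le_P 1$ in every $P$, forcing $x\preceq 1$ and hence $x=1$, contradicting $x\succ 1$; so the heart of the matter is to convert the algebraic nontriviality of $x$ in the free $l$-group into the existence of a compatible total left ordering. I would attempt this by a direct order-extension argument: the requirement that each row be met prescribes a consistent partial positivity datum, and the nontriviality of $x$ should guarantee that this datum generates a subsemigroup of $G$ omitting $1$, which by a Zorn's lemma argument extends to a positive cone $P$. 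Alternatively, one may embed $F(G)$ into $Aut(\Omega,<)$ by Holland's theorem, locate a point $\omega$ with $x(\omega)>\omega$, and pull the order $<$ back along the orbit map $g\mapsto g\cdot\omega$ to obtain $P$. In either approach the delicate step — and the place I expect the real work to lie — is verifying consistency of the extension (respectively, selecting $\omega$ with trivial stabilizer, so that the orbit map is injective and genuinely yields a left ordering of $G$).
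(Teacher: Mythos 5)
The first thing to say is that the paper contains no proof of this proposition to compare yours against: it is quoted from Conrad \cite{PC70}. More importantly, under the paper's own definitions the statement is essentially a tautology. The paper \emph{defines} $F(G)$ as the smallest lattice-ordered subgroup of $\prod_{P \in LO(G)} Aut(G, <_P)$ containing $i(G)$, with $\pi_P$ the restriction of the coordinate projection; a non-identity element of a subgroup of a direct product has a non-identity coordinate, by the definition of the product. The genuinely hard statement --- the one Conrad actually proved, and which the paper also does not prove --- is that this concrete model satisfies the universal property of the free $l$-group. Your proposal implicitly takes the opposite setup: you treat $F(G)$ as the abstractly presented free lattice-ordered group and try to show that the maps $\pi_P$ jointly separate its elements. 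That reading is legitimate (it is the real Conrad theorem), but it is strictly harder than what the displayed statement requires in the paper's framework, and you should be explicit that this is what you are attempting; in particular, in the abstract reading you cannot deduce ``$\pi_P(x) \preceq 1$ for all $P$ implies $x \preceq 1$'' without circularity, since that order-injectivity is exactly what is to be proved.

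On that harder reading, your reductions are correct and standard: the $\bigwedge\bigvee$ normal form, the replacement of $x$ by $|x| = x \vee x^{-1}$, and the identity $\pi_P(x)(k) = \pi_{kPk^{-1}}(x)(1)\cdot k$ (this is the paper's Corollary 2.2). But the proof stops exactly where the theorem begins, and this is a genuine gap rather than a deferred verification. In plan (a), two things are unproved and one of them is not a valid general principle: (i) the passage from ``$x \succ 1$ in the abstract free $l$-group'' to ``some choice of one element $g_{rs_r}$ from each row generates a subsemigroup of $G$ omitting $1$'' is the entire content of the theorem, and no mechanism for it is offered (note also that the rows impose a disjunctive condition, so there is no single ``positivity datum'' --- the choice of transversal matters); (ii) a subsemigroup of a left-orderable group omitting $1$ need not extend to the positive cone of a total left ordering: Zorn's lemma only produces a \emph{maximal partial} left order containing it, and maximality does not force totality (for a non-total maximal $S$ one has $1 \in \langle S, g\rangle^{+}$ and $1 \in \langle S, g^{-1}\rangle^{+}$ simultaneously; ruling this out is what the classical abelian extension theorem uses commutativity for). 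So ``extends by a Zorn's lemma argument'' is not available as stated. Plan (b) is the viable route and is close to how Conrad-type theorems are actually proved: from $x \neq 1$ abstractly one gets some $l$-group and homomorphism witnessing it, Holland's theorem represents that $l$-group on a chain, one picks $\omega$ with $|x|(\omega) > \omega$, and one orders $G$ by a lexicographic pullback along a family of orbit maps whose first entry is $\omega$; since the first coordinate of a lexicographic max or min equals the max or min of the first coordinates, the witness $\pi_P(x)(1) \neq 1$ survives the tie-breaking needed to kill stabilizers. As written, however, you have only flagged the stabilizer problem and the transfer-of-witness computation, not carried them out, so the proposal establishes a correct equivalent reformulation but leaves the implication itself unproved.
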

\begin{corollary}
\label{cor:1}
Let $x \in F(G)$ be any element satisfying $\pi_P(x)(1) = 1$ for all $P$ in $LO(G)$.  Then $x=1$.
\end{corollary}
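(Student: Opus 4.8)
The plan is to deduce the corollary from Proposition~\ref{prop:con} by upgrading the hypothesis, which only controls $x$ at the basepoint $1$, to full control of $\pi_P(x)$ as an automorphism. By the contrapositive of Proposition~\ref{prop:con}, it suffices to show that $\pi_P(x)$ is the identity automorphism of $(G,<_P)$ for every $P \in LO(G)$; since $\pi_P(x)$ is a bijection of $G$, this amounts to proving $\pi_P(x)(h)=h$ for every $h \in G$ and every $P$. So the entire problem reduces to transporting the known information ``$\pi_Q(x)(1)=1$'' from the point $1$ to an arbitrary point $h$, at the cost of passing to a different left ordering $Q$.

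First I would fix $h \in G$ and $P \in LO(G)$, and introduce the conjugate positive cone $Q = hPh^{-1}$, which again belongs to $LO(G)$, together with the right-translation map $R_h : G \to G$, $R_h(g)=gh$. A direct computation gives $g <_Q g'$ if and only if $gh <_P g'h$, so $R_h$ is an order isomorphism from $(G,<_Q)$ to $(G,<_P)$. Conjugation by $R_h$ therefore yields an $l$-isomorphism $\Phi : Aut(G,<_P) \to Aut(G,<_Q)$, $\Phi(f)=R_h^{-1}\circ f\circ R_h$; the pointwise meets and joins are preserved precisely because $R_h$ preserves order in both directions. The key computation is that $\Phi$ carries the generator $g_P$ (left multiplication by $g$ on $(G,<_P)$) to the corresponding generator $g_Q$, since $R_h^{-1}(g\cdot R_h(k)) = R_h^{-1}(gkh) = gk$.

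Next I would compare the two $l$-homomorphisms $\Phi\circ\pi_P$ and $\pi_Q$ from $F(G)$ to $Aut(G,<_Q)$. By the previous step they agree on every generator $i(g)$, and since $i(G)$ generates $F(G)$ as an $l$-group, two $l$-homomorphisms agreeing on $i(G)$ must coincide; hence $\Phi\circ\pi_P = \pi_Q$, that is, $R_h^{-1}\circ\pi_P(x)\circ R_h = \pi_Q(x)$. Evaluating this identity at $1 \in G$ gives $\pi_Q(x)(1) = \pi_P(x)(h)\,h^{-1}$. Applying the hypothesis to the positive cone $Q$ makes the left-hand side equal to $1$, so $\pi_P(x)(h)=h$. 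As $h$ and $P$ were arbitrary, every $\pi_P(x)$ is the identity, and Proposition~\ref{prop:con} then forces $x=1$.

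The step I expect to be the main obstacle is the identity $\Phi\circ\pi_P = \pi_Q$: this is where one must genuinely exploit that $x$ is an $l$-group word in $i(G)$ rather than an arbitrary element of the ambient product. The isomorphism $R_h$ is typically \emph{not} realized by an element of $Aut(G,<_P)$, and it only relates the single ordering $Q$ to $P$, so the argument that it can be pushed through all the group and lattice operations defining $x$ rests entirely on the generation property of $F(G)$ together with the fact that the projections are $l$-homomorphisms. Once that intertwining relation is in hand, the remaining evaluation is routine.
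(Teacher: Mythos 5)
Your proof is correct and takes essentially the same route as the paper: both reduce to Proposition~\ref{prop:con} by transporting the behaviour of $\pi_P(x)$ at a point $h$ to the behaviour at $1$ under a conjugate ordering, and your intertwining identity $\Phi\circ\pi_P=\pi_Q$ with $\pi_Q(x)(1)=\pi_P(x)(h)h^{-1}$ is exactly the detail the paper compresses into its one-line ``It follows that''. One remark: your conjugate $Q=hPh^{-1}$ is the correct one under the paper's conventions (consistent with the computation $g_jh>_Ph \Leftrightarrow g_j\in hPh^{-1}$ in the proof of Theorem~\ref{th:acc}), so the paper's $Q=h^{-1}Ph$ appears to be a typo.
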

\begin{proof}
Suppose that $x$ is a non-identity element of $F(G)$, from Conrad's proposition there exists $P$ such that $\pi_P(x)(h) \neq h $ for some $h$ in $G$.  It follows that $\pi_Q(x)(1) \neq 1$, for $Q = h^{-1}Ph$.
\end{proof}

From this point forward we will simplify our notation by writing $g \in F(G)$ in place of $i(g) \in F(G)$, for any element $g$ of $G$.  Thus, any element of $F(G)$ can be (non-uniquely) written in the form $\bigvee_{i \in I} \bigwedge_{j \in J} g_{ij}$ for suitable $g_{ij} \in G$.  The map $\pi_P:F(G) \rightarrow Aut(G, <_P)$ sends such an element to the order-preserving map whose action on an element $h \in G$ is defined according to the rule:
\[\pi_P(\bigvee_{i \in I} \bigwedge_{j \in J} g_{ij})(h) = max_{i \in I}^{P} min_{j \in J}^P \{ g_{ij}h \}
.\]
Here, the superscript $P$ appearing above $max$ and $min$ indicates that the $max$ and $min$ are taken relative to the total ordering $<_P$ of $G$.

\section{Free lattice ordered groups and the topology on the space of left orderings}

In this section, we establish several connections between the topology of $LO(G)$ and the structure of the group $F(G)$.  We begin with a generalization of a known result, which was originally proven in the case of $F(F_n)$, the free lattice-ordered group over a free group \cite{MA86}.  We observe, however, that the same result holds for any left-orderable group $G$.

Recall that an element $x$ in a lattice-ordered group $L$ is said to be a basic element if the set $ \{ y \in L | e \leq y \leq x \}$ is totally ordered by the restriction of the lattice ordering.  

\begin{lemma}\cite{MA86}
\label{lem:basic}
Suppose that $1 \prec x$ is an element of $F(G)$.  Then $x$ is a basic element if and only if there exists a unique left ordering $<_P$ of $G$ such that $\pi_P(x)(1) >_P 1$.
\end{lemma}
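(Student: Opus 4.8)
The plan is to reduce every comparison we need to a pointwise comparison of ``values'' at the identity. For $w \in F(G)$ and $P \in LO(G)$ write $v_P(w) := \pi_P(w)(1) \in G$. I would first record the routine facts that each projection $\pi_P \colon F(G) \to Aut(G,<_P)$ is an $l$-homomorphism and that meets and joins in $Aut(G,<_P)$ are pointwise, so that $v_P(u \wedge w) = \min^P\{v_P(u), v_P(w)\}$ and $v_P(u \vee w) = \max^P\{v_P(u), v_P(w)\}$, while $v_P(g) = g$ for $g \in G$ and $v_P(e) = 1$. I would also note the trivial half of a comparison principle: evaluating the definition of $\preceq$ at $h = 1$ in each coordinate shows that $u \preceq w$ implies $v_P(u) \leq_P v_P(w)$ for every $P$.

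The key step, and the one I expect to be the main obstacle, is the converse: $u \preceq w$ whenever $v_P(u) \leq_P v_P(w)$ for all $P$. To prove this I would write $u = \bigvee_i \bigwedge_j g_{ij}$ and use the action formula $\pi_Q(u)(h) = \max^Q_i \min^Q_j \{g_{ij}h\}$. The crucial algebraic fact is that right translation by $h$ intertwines the orderings: $g_{ij}h <_Q g_{kl}h$ if and only if $g_{ij} <_{hQh^{-1}} g_{kl}$, because $(g_{ij}h)^{-1}(g_{kl}h) = h^{-1}(g_{ij}^{-1}g_{kl})h$ lies in $Q$ exactly when $g_{ij}^{-1}g_{kl}$ lies in $hQh^{-1}$. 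Since finite maxima and minima depend only on the order relation among the finitely many elements present, this yields the identity $\pi_Q(u)(h) = v_{hQh^{-1}}(u)\,h$. Consequently, setting $R = hQh^{-1}$, the inequality $\pi_Q(u)(h) \leq_Q \pi_Q(w)(h)$ is equivalent to $v_R(u) \leq_R v_R(w)$; as $(Q,h)$ ranges over all pairs, $R$ ranges over all of $LO(G)$, so the hypothesis forces $\pi_Q(u)(h) \leq_Q \pi_Q(w)(h)$ for all $Q,h$, i.e.\ $u \preceq w$. This is the order-refinement of Corollary~\ref{cor:1}, which is the $u=w$ injectivity shadow of the same computation.

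With the comparison principle in hand the forward implication is short. If $x$ is basic then $x \neq e$, so by Corollary~\ref{cor:1} some $v_P(x) \neq 1$, and $e \preceq x$ forces $v_P(x) \geq_P 1$, whence $v_P(x) >_P 1$. For uniqueness I argue by contraposition: given distinct $P \neq Q$ with $v_P(x) >_P 1$ and $v_Q(x) >_Q 1$, I pick $g \in P \setminus Q$ (such $g$ exists since $P \neq Q$), so $g >_P 1$ and $g <_Q 1$, and set $y = x \wedge (g \vee e)$ and $z = x \wedge (g^{-1} \vee e)$. Both lie in $[e,x]$ because $g \vee e, g^{-1} \vee e \succeq e$. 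The homomorphism rules then give $v_P(y) = \min^P\{v_P(x), g\} >_P 1$ while $v_P(z) = \min^P\{v_P(x), 1\} = 1$, and symmetrically $v_Q(y) = 1$ while $v_Q(z) = \min^Q\{v_Q(x), g^{-1}\} >_Q 1$. Thus $v_P(y) >_P v_P(z)$ and $v_Q(y) <_Q v_Q(z)$, so neither $y \preceq z$ nor $z \preceq y$; the interval $[e,x]$ is not a chain, and $x$ is not basic.

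For the reverse implication, suppose $P$ is the unique ordering with $v_P(x) >_P 1$, so $v_R(x) = 1$ for all $R \neq P$. Let $y, z \in [e,x]$. Applying the easy half of the comparison principle to $e \preceq y \preceq x$ gives $1 \leq_R v_R(y) \leq_R v_R(x)$, hence $v_R(y) = 1$ for every $R \neq P$, and likewise $v_R(z) = 1$ there. At the single ordering $P$ the elements $v_P(y), v_P(z)$ lie in the totally ordered group $(G, <_P)$ and are therefore comparable; say $v_P(y) \leq_P v_P(z)$. Then $v_R(y) \leq_R v_R(z)$ for all $R$, so the converse comparison principle yields $y \preceq z$. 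Hence $[e,x]$ is totally ordered and $x$ is basic, completing the proof.
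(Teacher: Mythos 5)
Your proof is correct and is essentially the paper's argument: your incomparable pair $x \wedge (g \vee e)$, $x \wedge (g^{-1} \vee e)$ coincides with the paper's pair $(x \wedge g) \vee 1$, $(x \wedge g^{-1}) \vee 1$ by distributivity (since $1 \prec x$), and your comparison principle ($u \preceq w$ iff $\pi_R(u)(1) \leq_R \pi_R(w)(1)$ for all $R$) is precisely what the paper extracts by applying Corollary \ref{cor:1} to the element $y_1^{-1}y_2 \wedge 1$. The only difference is packaging: you prove that principle directly from the right-translation identity $\pi_Q(u)(h) = \pi_{hQh^{-1}}(u)(1)\,h$ — the same conjugation trick underlying the paper's proof of Corollary \ref{cor:1} — rather than routing the order comparison through the corollary itself.
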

\begin{proof}
Suppose that $<_P$ is the unique left ordering of $G$ for which $\pi_P(x)(1) >_P 1$, and suppose that $y_1, y_2$ are two distinct elements of $ F(G)$ that satisfy $1 \prec y_i \prec x$.  Without loss of generality, we may assume that $\pi_P(y_1)(1) \leq_P \pi_P(y_2)(1)$, and hence $1 \leq_P \pi_P(y_1^{-1} y_2)(1)$.   Therefore, considering the element $y_1^{-1}y_2 \wedge 1 \in F(G)$, we compute that $\pi_P(y_1^{-1}y_2 \wedge 1)(1) = min\{ \pi_P(y_1^{-1}y_2)(1), 1 \} = 1$.

Now in any left ordering $<_Q$ with $Q \neq P$, we have $1 \leq_Q \pi_Q(y_i)(1) \leq_Q \pi_Q(x)(1) \leq_Q 1$, where the final inequality follows from our assumption that $<_P$ is the unique left ordering with $\pi_P(x)(1) >_P 1$.   

Therefore, $\pi_P(y_1^{-1}y_2 \wedge 1)(1) = 1$ for all orderings $P$ of $G$.   It follows that $y_1^{-1}y_2 \wedge 1 = 1$ by Corollary \ref{cor:1}, and hence $1 \prec y_1^{-1}y_2$, and $y_2 \prec y_1$ as desired.

On the other hand, suppose that $x$ is a (positive) basic element, and suppose that $P$ and $Q$ are distinct positive cones such that $\pi_P(x)(1) >_P 1$ and  $\pi_Q(x)(1) >_Q 1$.  Choose an element $h$ of $G$ such that $h >_P 1$ and $h^{-1} >_Q 1$.  Then the elements $y_1 = (x \wedge h) \vee 1$ and $y_2 = (x \wedge h^{-1}) \vee 1$  satisfy $1 \prec y_i \prec x$, yet are not comparable in the partial ordering $\prec$ of $F(G)$.  This follows from computing
\[\pi_P(y_1)(1) = max\{ min\{ \pi_P(x)(1), h \}, 1\} >_P 1, 
\] 
and
\[ \pi_P(y_2)(1) = max \{ min\{ \pi_P(x)(1), h^{-1} \}, 1 \} = 1, \]
while $\pi_Q(y_1)(1) = 1$ and $\pi_Q(y_2)(1) >_Q 1$.

\end{proof}

\begin{theorem} \cite{MA86}
\label{th:basic}
Let $G$ be a left orderable group.  Then $F(G)$ contains a basic element if and only if $LO(G)$ contains an isolated point.
 \end{theorem}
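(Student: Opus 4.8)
The plan is to reduce both sides of the equivalence, via Lemma \ref{lem:basic}, to a statement about a single distinguished positive cone, and then to match these statements using the explicit formula for $\pi_P$ on a normal form. Throughout I would use that every element of $F(G)$ admits a \emph{finite} representation $x = \bigvee_{i \in I} \bigwedge_{j \in J} g_{ij}$ with $g_{ij} \in G$ and $I, J$ finite, so that $\pi_P(x)(1) = \max_i^P \min_j^P \{ g_{ij} \}$. The finiteness is essential, since an isolated point must be cut out by only finitely many elements of $G$, so this is the structural input I would flag at the outset.

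First, for the forward direction, suppose $1 \prec x$ is a basic element, and let $<_P$ be the unique left ordering with $\pi_P(x)(1) >_P 1$ furnished by Lemma \ref{lem:basic}. Writing $x$ in the finite normal form above, the inequality $\max_i^P \min_j^P \{ g_{ij} \} >_P 1$ forces a single index $i_0 \in I$ with $g_{i_0 j} >_P 1$ for every $j \in J$. I would then claim that the finite family $\{ g_{i_0 j} \}_{j \in J}$ cuts out $P$ as a one-point set. Indeed, if $Q$ is any positive cone containing all of these elements, then $\min_j^Q \{ g_{i_0 j} \} >_Q 1$, whence $\pi_Q(x)(1) \geq_Q \min_j^Q \{ g_{i_0 j} \} >_Q 1$; the uniqueness clause of Lemma \ref{lem:basic} then gives $Q = P$. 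Since $P$ itself contains the $g_{i_0 j}$, this shows $\{ P \} = \bigcap_{j \in J} U_{g_{i_0 j}}$, a basic open set, so $P$ is isolated.

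For the reverse direction, suppose $P$ is isolated, say $\{ P \} = U_{g_1} \cap \cdots \cap U_{g_n}$, so that $P$ is the unique positive cone containing $g_1, \ldots, g_n$. I would propose the candidate $x = (g_1 \wedge \cdots \wedge g_n) \vee 1 \in F(G)$, which by construction is a join with $1$ and hence satisfies $1 \prec x$ or $x = 1$. Using the formula for $\pi_Q$, one computes $\pi_Q(x)(1) = \max\{ \min_i^Q \{ g_i \}, 1 \}$, so that $\pi_Q(x)(1) >_Q 1$ holds precisely when $\min_i^Q \{ g_i \} >_Q 1$, i.e.\ precisely when every $g_i >_Q 1$, i.e.\ precisely when $Q$ contains $g_1, \ldots, g_n$, i.e.\ precisely when $Q = P$. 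In particular $\pi_P(x)(1) >_P 1$ shows $\pi_P(x)$ moves the identity, so $x \neq 1$ and thus $1 \prec x$; Lemma \ref{lem:basic} then certifies that $x$ is basic.

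Once Lemma \ref{lem:basic} is in hand the bookkeeping is light, so I do not anticipate a serious obstacle. The two points requiring genuine care are the appeal to finiteness of the normal form in the forward direction (so that the extracted family $\{ g_{i_0 j} \}_{j \in J}$ is finite and genuinely defines a basic open set), and, in the reverse direction, the verification that the constructed element is \emph{strictly} positive rather than merely bounded below by $1$ in $\prec$, which I would derive from the fact that its image under $\pi_P$ moves the identity.
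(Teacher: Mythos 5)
Your proof is correct and follows essentially the same route as the paper: both directions reduce to Lemma \ref{lem:basic}, using the candidate $(\bigwedge_{j=1}^n g_j) \vee 1$ when $P$ is isolated, and extracting the meet-family $\{g_{i_0 j}\}_{j \in J}$ realizing the maximum in the finite normal form when $x$ is basic. Your write-up is in fact slightly more careful than the paper's at two points the paper leaves implicit --- the verification that $1 \prec x$ is strict via $\pi_P(x)$ moving the identity, and the explicit estimate $\pi_Q(x)(1) \geq_Q \min_j^Q \{g_{i_0 j}\}$ establishing uniqueness of $P$ --- so no changes are needed.
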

\begin{proof}
Suppose that 
\[ \{P \} = U_{g_1} \cap \cdots \cap U_{g_n} \] 
is an isolated point in $LO(G)$.  Then $P$ is the unique left ordering for which $1 <_P g_j$ for all $j$, and is therefore the unique ordering for which $\pi_P(g_j)(1) >_P 1$ for all $j$.  Therefore, $<_P$ is the unique left ordering for which $\pi_P(\bigwedge_{j=1}^n g_j)(1) >_P 1$, and hence $<_P$ is the unique left ordering for which  $\pi_P((\bigwedge_{j=1}^n g_j) \vee 1)(1) >_P 1$.  It follows that $(\bigwedge_{j=1}^n g_j) \vee 1$ is a basic element of $F(G)$, by Lemma \ref{lem:basic}.

Conversely, if $\bigvee_{i \in I}^m \bigwedge_{j \in J}^n g_{ij}$ is a basic element in $F(G)$, then there is a unique left ordering $<_P$ of $G$ such that $\pi_P(\bigvee_{i \in I}^m \bigwedge_{j \in J}^n g_{ij})(1) >_P 1$.   Therefore, for some index $i$, $\pi_P(\bigwedge_{j \in J} g_{ij})(1) = max_{j \in J} \{g_{ij} \} >_P 1$, and $<_P$ is the unique left ordering of $G$ for which this inequality holds, for our chosen index $i$.  In other words, $<_P$ is the unique left ordering in which all the elements $g_{ij}$ are positive for our chosen index $i$, so that 
\[ \{ P \} = \bigcap_{j \in J} U_{g_{ij}} \]
 is an isolated point in $LO(G)$.
\end{proof}

Recall that for any left-orderable group $G$, the space $LO(G)$ comes equipped with a $G$-action by conjugation, which is an action by homeomorphisms.  Set
\[ Orb_G(P) = \{ gPg^{-1} | g \in G \}
,\]
and we denote the closure of each such orbit by $\overline{Orb_G(P)}$.  The group action on the space of left orderings $LO(G)$ has been used to great success in investigating the structure of the space of left orderings, see \cite{NF07}, \cite{AC08}, \cite{AR09}, \cite{NW09}.  However, in most applications, one often asks if a positive cone $P$ is an accumulation point of its own conjugates in order to show that $P$ is not an isolated point.  Perhaps more useful would be a way of answering the question:  When is $Q$ an accumulation point of the conjugates of some different positive cone $P$, that is, $Q  \in \overline{Orb_G(P)}$?  We find that this question is equivalent to an algebraic question about the lattice-ordered free group $F(G)$.

\begin{theorem}
\label{th:acc}
Let $G$ be a left orderable group, and let $P, Q \in LO(G)$ be given.  Then $Q \in \overline{Orb_G(P)}$ if and only if $ker( \pi_P) \subset ker( \pi_Q)$.
\end{theorem}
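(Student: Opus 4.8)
The plan is to translate each side of the equivalence into a concrete, checkable condition and then match them. On the topological side, unwinding the subbasis shows that $Q \in \overline{Orb_G(P)}$ \emph{fails} precisely when there is a finite family $g_1, \dots, g_n \in Q$ such that no conjugate $gPg^{-1}$ contains all of $g_1, \dots, g_n$; equivalently, for every $g \in G$ at least one $g_i$ satisfies $g_i <_{gPg^{-1}} 1$. On the algebraic side, I first want to record a reduction lemma that converts membership in $\ker(\pi_P)$, which is an a priori pointwise condition on the automorphism $\pi_P(x)$, into a condition about base-point values over the whole orbit of $P$:
\[ x \in \ker(\pi_P) \iff \pi_{gPg^{-1}}(x)(1) = 1 \quad\text{for all } g \in G. \]
This is the device that lets the two sides meet.

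Establishing this reduction lemma is the step I expect to be the main obstacle, though it is essentially the computation already used in Corollary \ref{cor:1}. The key point is that right multiplication $a \mapsto a h^{-1}$ is an order isomorphism $(G, <_{hPh^{-1}}) \to (G, <_P)$, so it commutes with the $\max$/$\min$ defining $\pi$. Carrying a monomial $x = \bigvee_i \bigwedge_j g_{ij}$ through this isomorphism yields the identity $\pi_{hPh^{-1}}(x)(1) = \pi_P(x)(h)\, h^{-1}$, whence $\pi_{hPh^{-1}}(x)(1) = 1$ if and only if $\pi_P(x)(h) = h$. Since $\pi_P(x) = 1$ in $Aut(G, <_P)$ means exactly $\pi_P(x)(h) = h$ for every $h$, letting $h$ range over $G$ gives the displayed equivalence.

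For the implication $\ker(\pi_P) \subset \ker(\pi_Q) \Rightarrow Q \in \overline{Orb_G(P)}$ I argue by contraposition. If $Q \notin \overline{Orb_G(P)}$, take a separating family $g_1, \dots, g_n \in Q$ as above and set $x = (\bigwedge_{i=1}^n g_i) \vee 1$. For any conjugate $P' = gPg^{-1}$ some $g_i$ is negative, so $\min_i^{P'}\{g_i\} <_{P'} 1$ and hence $\pi_{P'}(x)(1) = 1$; by the reduction lemma $x \in \ker(\pi_P)$. On the other hand all $g_i \in Q$, so $\pi_Q(x)(1) = \min_i^Q\{g_i\} >_Q 1$ and $x \notin \ker(\pi_Q)$, witnessing $\ker(\pi_P) \not\subset \ker(\pi_Q)$.

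For the converse I again argue by contraposition, starting from $x \in \ker(\pi_P) \setminus \ker(\pi_Q)$. By the reduction lemma there is a conjugate $Q'' = hQh^{-1}$ with $\pi_{Q''}(x)(1) \neq 1$; since the $G$-action is by homeomorphisms, $\overline{Orb_G(P)}$ is $G$-invariant, so it suffices to show $Q'' \notin \overline{Orb_G(P)}$. Writing $x = \bigvee_{i \in I} \bigwedge_{j \in J} g_{ij}$ with $I, J$ finite, I split on the sign of $\pi_{Q''}(x)(1)$. If it is $>_{Q''} 1$, some index $i_0$ has all $g_{i_0 j} \in Q''$, and the neighborhood $\bigcap_j U_{g_{i_0 j}}$ of $Q''$ misses $Orb_G(P)$: any $P'$ in it would force $\pi_{P'}(x)(1) >_{P'} 1$, contradicting $x \in \ker(\pi_P)$ via the reduction lemma. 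If instead it is $<_{Q''} 1$, then for each $i$ some $g_{i, j(i)} <_{Q''} 1$, and the dual neighborhood $\bigcap_i U_{g_{i,j(i)}^{-1}}$ of $Q''$ misses $Orb_G(P)$ by the same contradiction with $\pi_{P'}(x)(1) = 1$. Either way $Q'' \notin \overline{Orb_G(P)}$, completing the argument. The only genuinely delicate points are the reduction lemma itself and remembering to pass to a conjugate of $Q$ before building the separating neighborhood.
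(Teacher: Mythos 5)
Your proof is correct, and it rests on the same two engines as the paper's: the witness element $(\bigwedge_i g_i)\vee 1$, and the correspondence between conjugating an ordering and translating the evaluation point. But the organization is genuinely different. You run both implications contrapositively and channel everything through your reduction lemma, which is the two-sided, whole-orbit version of the one-sided computation the paper records in Corollary \ref{cor:1}; this keeps every evaluation at the base point $1$, at the cost of passing from $Q$ to a conjugate $Q''$ and invoking $G$-invariance of $\overline{Orb_G(P)}$. The paper argues both directions directly: for one direction it uses density to push conjugates $fPf^{-1}$ into neighborhoods of $Q$ and then evaluates $\pi_P(x)$ at translated points $hf$, which requires bookkeeping (the elements $h$, $f$, and the selected elements $h_i$) that your version avoids; your sign dichotomy for $\pi_{Q''}(x)(1)$ is exactly the paper's Case 1/Case 2 transported to the base point. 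What your route buys is symmetry and modularity --- in particular, the reduction lemma immediately shows that conjugate orderings have equal kernels, which illuminates why kernels are constant on orbits (the fact behind the paper's later remark that the closed set attached to a kernel is well defined); what the paper's route buys is a self-contained argument that explicitly exhibits which conjugates of $P$ approximate $Q$. One harmless slip: the order isomorphism $(G,<_{hPh^{-1}})\to(G,<_P)$ is $a\mapsto ah$, not $a\mapsto ah^{-1}$ (the latter goes in the opposite direction), but the identity you derive from it, $\pi_{hPh^{-1}}(x)(1)=\pi_P(x)(h)h^{-1}$, is the correct one, and since $h$ ranges over all of $G$ your lemma is unaffected.
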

\begin{proof}
Suppose that $ker( \pi_P) \subset ker( \pi_Q)$, and that $Q$ lies in the basic open set $\bigcap_{j=1}^n U_{g_j}$.  We must show that some conjugate of $P$ lies in this open set as well.

Consider the element $(\bigwedge_{j=1}^n g_j) \vee 1$ in $F(G)$.  As $Q \in \bigcap_{j=1}^n U_{g_j}$, we know that $g_j >_Q 1$ for all $j$, and hence we find that
\[ \pi_Q((\bigwedge_{j=1}^n g_j) \vee 1)(1) = \max\{ \min\{g_j \}, 1\} = \min\{g_j\} >_Q 1 .
\]
Therefore, $(\bigwedge_{j=1}^n g_j) \vee 1$ is not in the kernel of the map $\pi_Q$, and so from our assumption it is not in the kernel of the map $\pi_P$.  Thus, there exists $h \in G$ such that $\pi_P((\bigwedge_{j=1}^n g_j) \vee 1)(h) \neq h$, and we compute
\[ \pi_P((\bigwedge_{j=1}^n g_j) \vee 1)(h) = \max\{ \min\{g_jh \}, h\} >_P h,\]
so that $g_jh >_P h$ for $j=1, \cdots , n$.  Therefore, $h^{-1}g_jh>_P1$, and hence $h^{-1}g_jh \in P$ for all $j$.  This is equivalent to $g_j \in hPh^{-1}$ for all $j$, or $hPh^{-1} \in \bigcap_{j=1}^n U_{g_j}$, so that $Q$ is an accumulation point of the orbit of $P$. 

On the other hand, suppose that $Q \in \overline{Orb_G(P)}$, and let $\bigvee_{i \in I} \bigwedge_{j \in J} g_{ij}$ be any element of $F(G)$ such that $\pi_Q(\bigvee_{i \in I} \bigwedge_{j \in J} g_{ij}) \neq 1$.  There are two cases to consider, in order to show that $\bigvee_{i \in I} \bigwedge_{j \in J} g_{ij} \notin ker( \pi_P)$.

\textbf{Case 1.} There exists $i$ such that $\pi_Q(\bigwedge_{j \in J} g_{ij})(h) >_Q h$ for some $h \in G$.  Then $\min_j{g_{ij}h} >_Q h$, and therefore $h^{-1}g_{ij}h \in Q$ for all $j$, hence $ Q \in \bigcap_{j \in J} U_{h^{-1}g_{ij}h}$.  By assumption, we can choose $f \in G$ such that $fPf^{-1} \in \bigcap_{j \in J} U_{h^{-1}g_{ij}h}$, so that $h^{-1}g_{ij}h \in fPf^{-1}$ for all $j$.  In other words, $f^{-1}h^{-1}g_{ij}hf >_P 1$ for all $j$, so that $g_{ij} hf >_P hf$ for all $j$.  

Now, we may compute 
\[ \pi_P(\bigvee_{i \in I} \bigwedge_{j \in J} g_{ij})(hf) >_P \pi_P(\bigwedge_{j \in J} g_{ij})(hf) = \min_j\{ g_{ij}hf \} >_P hf.\]
We conclude that $\bigvee_{i \in I} \bigwedge_{j \in J} g_{ij} \notin ker( \pi_P)$.

\textbf{Case 2.} For all $i$, $\pi_Q(\bigwedge_{j \in J} g_{ij})(h) \leq_Q h$ for all $h \in G$. Then in particular, we may choose $h \in G$ such that $\pi_Q(\bigwedge_{j \in J} g_{ij})(h) <_Q h$ for every $i \in I$ (strict inequality), since the image of  $\bigvee_{i \in I} \bigwedge_{j \in J} g_{ij}$ must act nontrivially on $(G, <_Q)$.

Now with $h$ as above, we observe that for every index $i$ there exists an index $j$ such that $g_{ij}h <_Q h$.  Thus, for every $i$, we may choose $h_i$ from the set of elements $\{g_{ij} \}_{j \in J}$, so that $h_i$ satisfies $h_ih <_Q h$.  Then each $h_i$ satisfies $h^{-1} h_i h <_Q 1$, so that $Q \in \bigcap_{i \in I} U_{h^{-1}h_i^{-1}h}$.  We may therefore choose $f \in G$ so that $fPf^{-1} \in \bigcap_{i \in I} U_{h^{-1}h_i^{-1}h}$, in other words, $f^{-1} h^{-1} h_i^{-1} h f >_P 1$ for all $i \in I$.  It follows that $f^{-1} h^{-1} h_i h f <_P 1$ and so $h_ih f <_P hf$ for all $i \in I$. 

Thus, we find that for every $i \in I$, 
\[ \pi_P(\bigwedge_{j \in J} g_{ij})(hf) = \min_j \{ g_{ij} hf \} \leq_P h_ihf <_P hf,\]
where the inequality $\min_j\{ g_{ij} hf \} \leq_P h_ihf$ follows from the fact that $h_i$ lies in the set $\{g_{ij} \}_{j \in J}$.

Thus, when we take a (finite) maximum over all $i$, we compute that  $\pi_P(\bigvee_{i \in I} \bigwedge_{j \in J} g_{ij})(hf) = \min_j\{ g_{ij} hf \}$ for some $i$, and hence 
\[ \pi_P(\bigvee_{i \in I} \bigwedge_{j \in J} g_{ij})(hf) = \min_j\{ g_{ij} hf \}  \leq_P h_ihf <_P hf.
\]
It follows that $ \pi_P(\bigvee_{i \in I} \bigwedge_{j \in J} g_{ij})$ is nontrivial, and the claim is proven.
\end{proof}

\begin{corollary}
\label{cor:dense1}
For a given positive cone $P$ in a left orderable group $G$, $\overline{Orb_G(P)} = LO(G)$ if and only if $\pi_P : F(G) \rightarrow Aut(G, <_P)$ is injective.
\end{corollary}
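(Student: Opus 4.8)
The plan is to read the statement off directly from Theorem~\ref{th:acc}, which characterizes membership in $\overline{Orb_G(P)}$ by a containment of kernels. The condition $\overline{Orb_G(P)} = LO(G)$ says precisely that every $Q \in LO(G)$ lies in $\overline{Orb_G(P)}$, so Theorem~\ref{th:acc} translates it into the purely algebraic assertion that $\ker(\pi_P) \subset \ker(\pi_Q)$ holds for \emph{every} positive cone $Q \in LO(G)$. It therefore suffices to show that this whole family of containments holds if and only if $\ker(\pi_P)$ is trivial, i.e. $\pi_P$ is injective.

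One implication is immediate. If $\pi_P$ is injective, then $\ker(\pi_P) = \{1\}$, which is trivially contained in $\ker(\pi_Q)$ for every $Q$; by Theorem~\ref{th:acc} this forces $Q \in \overline{Orb_G(P)}$ for all $Q$, whence $\overline{Orb_G(P)} = LO(G)$.

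For the converse, suppose $\overline{Orb_G(P)} = LO(G)$. Then $\ker(\pi_P) \subset \ker(\pi_Q)$ for every $Q$, so $\ker(\pi_P)$ is contained in the intersection $\bigcap_{Q \in LO(G)} \ker(\pi_Q)$. The key point is that this intersection is trivial: if $x \in F(G)$ satisfies $\pi_Q(x) = 1$ for all $Q \in LO(G)$, then Conrad's Proposition~\ref{prop:con} (equivalently, Corollary~\ref{cor:1}) forces $x = 1$. Hence $\ker(\pi_P) = \{1\}$ and $\pi_P$ is injective.

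There is no genuine obstacle here; the corollary is a formal consequence of Theorem~\ref{th:acc} once one observes that the only non-tautological ingredient is the triviality of $\bigcap_{Q} \ker(\pi_Q)$, which is exactly the content of Conrad's proposition. The one point I would state carefully is that the hypothesis $\overline{Orb_G(P)} = LO(G)$ must be unpacked as a condition quantified over all $Q$, so that the individual kernel containments furnished by Theorem~\ref{th:acc} may be intersected.
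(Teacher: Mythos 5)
Your proof is correct and follows the same route as the paper: both directions are read off from Theorem~\ref{th:acc}, with the converse reduced to the triviality of $\bigcap_{Q \in LO(G)} \ker(\pi_Q)$ via Conrad's Proposition~\ref{prop:con}. The only difference is expository—you spell out the quantification over $Q$ that the paper leaves implicit.
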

\begin{proof}
It is clear that if $\pi_P : F(G) \rightarrow Aut(G, <_P)$ has trivial kernel, then $\overline{Orb_G(P)} = LO(G)$ by Theorem \ref{th:acc}.

For the other direction, suppose that $\overline{Orb_G(P)} = LO(G)$.  From Theorem \ref{th:acc} we deduce the containment $ker(\pi_P) \subset \bigcap_{Q \in LO(G)} ker(\pi_Q)$.  However, from Proposition \ref{prop:con} we find that $\bigcap_{Q \in LO(G)} ker(\pi_Q) = \{1 \}$, so that $ker(\pi_P) = \{1 \}$.
\end{proof}

Thus, for a given left orderable group $G$, injectivity of some map $\pi_P : F(G) \rightarrow Aut(G, <_P)$ for some ordering $P \in LO(G)$ tells us a great deal about the structure of $LO(G)$. 

\begin{proposition}
\label{prop:noiso}
Let $G$ be a left orderable group, and suppose that there exists $P \in LO(G)$ such that $\pi_P$ is injective.  Then $LO(G)$ contains no isolated points.
\end{proposition}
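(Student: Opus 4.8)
The plan is to argue by contradiction, turning the hypothesis into a statement about the $G$-action and then analyzing it topologically. Suppose $LO(G)$ contains an isolated point $Q$, so that $\{Q\}$ is open. Since $\pi_P$ is injective, Corollary \ref{cor:dense1} gives $\overline{Orb_G(P)} = LO(G)$, i.e. the orbit of $P$ is dense; density then forces some conjugate $gPg^{-1}$ to lie in the nonempty open set $\{Q\}$, whence $Q = gPg^{-1}$. As conjugation by $g$ is a homeomorphism of $LO(G)$ carrying $P$ to $Q$, the point $P$ is itself isolated. Running this argument on an arbitrary isolated point shows that \emph{every} isolated point lies in the dense orbit $Orb_G(P)$; consequently the set $D$ of all isolated points equals $Orb_G(P)$, and is an open, dense, conjugation-invariant subset of $LO(G)$.

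The problem is thereby reduced to showing that a left-orderable group cannot carry a dense orbit consisting entirely of isolated points. The tool I would use is that, for each fixed $w \in F(G)$ and $h \in G$, the assignment $R \mapsto \pi_R(w)(h)$ is locally constant: writing $w = \bigvee_i \bigwedge_j g_{ij}$, its value is one of the finitely many elements $g_{ij}h$, and which one is selected is determined by finitely many comparisons of the form $U_{g}$, each clopen in $LO(G)$. Hence the support $\{R : \pi_R(w) \neq 1\}$ of any $w$ is an open, conjugation-invariant set, empty exactly when $w=1$ by Corollary \ref{cor:1}. I would combine this with Theorem \ref{th:acc}: if some $R^\ast \notin Orb_G(P)$ had $\ker(\pi_{R^\ast}) = \{1\}$, then $P$ would lie in the dense orbit of $R^\ast$ and, being isolated, would be conjugate to $R^\ast$, a contradiction; so every $R^\ast$ off the dense orbit has $\ker(\pi_{R^\ast}) \neq \{1\}$ and hence, again by Theorem \ref{th:acc}, a non-dense orbit. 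Thus $Orb_G(P)$ is the \emph{unique} dense orbit and coincides with $D$. I would then split into two cases. If $D = LO(G)$, then $LO(G)$ is compact and discrete, hence finite, and I would contradict injectivity of $\pi_P$ directly. If $D \subsetneq LO(G)$, the complement $C$ is a nonempty, closed, conjugation-invariant, nowhere-dense set of non-isolated points, and I would play the clopen supports of the basic element produced from $P$ (via Theorem \ref{th:basic} and Lemma \ref{lem:basic}, whose identity-support is the single point $P$) against the way $Orb_G(P)$ accumulates on $C$.

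The main obstacle is exactly this final dichotomy. In purely dynamical terms the statement is false — a group can act by homeomorphisms on a compact Hausdorff space with a dense orbit of isolated points while non-isolated points persist — so general topology cannot close the argument, and one must exploit the arithmetic specific to $LO(G)$: that the support sets arising from $F(G)$ are simultaneously clopen and conjugation-invariant, and that the kernel containments of Theorem \ref{th:acc} become strict off the dense orbit. I expect the finite case to be the sharpest point of the argument, since ruling out a finite $LO(G)$ equipped with an injective $\pi_P$ amounts to showing that the nontrivial free lattice-ordered group $F(G)$ cannot embed into a single factor $Aut(G,<_P)$ when $G$ admits only finitely many left orderings; pinning this down, rather than the bookkeeping of the reduction, is where the substantive work lies.
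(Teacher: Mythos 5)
Your opening reduction is exactly the paper's first step: density of $Orb_G(P)$ (Corollary \ref{cor:dense1}) forces any isolated point to be a conjugate of $P$, and since conjugation is a homeomorphism this means $P$ itself would have to be isolated. But from there the proposal stalls. You reduce to showing that $Orb_G(P)$ cannot be a dense orbit of isolated points, observe (correctly) that this is false for general group actions on compact Hausdorff spaces, and then sketch a case split whose two branches you explicitly leave open: ``I would contradict injectivity of $\pi_P$ directly'' and ``I would play the clopen supports \dots against the way $Orb_G(P)$ accumulates on $C$'' are statements of intent, not arguments. By your own admission the substantive work is exactly what is missing, so this is a genuine gap, not a complete proof.

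The idea you are missing is the paper's use of the order-reversal involution. The map $Q \mapsto Q^{-1}$ (replacing a left ordering by its reverse) is a homeomorphism of $LO(G)$, since it pulls back the subbasic set $U_g$ to $U_{g^{-1}}$; hence if $P$ is isolated, so is $P^{-1}$. By your own reduction, every isolated point lies in $Orb_G(P)$, so $P^{-1} = gPg^{-1}$ for some $g \in G$, and $g \neq 1$ since $P \neq P^{-1}$. This is algebraically impossible for a positive cone: if $g \in P$, then $g = ggg^{-1} \in gPg^{-1} = P^{-1}$, a contradiction, and symmetrically $g \in P^{-1}$ is impossible; since every nontrivial element of $G$ lies in $P \sqcup P^{-1}$, no such $g$ exists, so $P$ is not isolated. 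Note that this single observation makes your finite/infinite dichotomy unnecessary --- in particular, you never need to show that $F(G)$ cannot embed in a single factor $Aut(G,<_P)$ when $LO(G)$ is finite. The structure on $LO(G)$ beyond abstract dynamics that closes the argument is not the clopen supports you propose, but the conjugation-equivariant involution $Q \mapsto Q^{-1}$ together with the elementary fact that no positive cone is conjugate to its own inverse.
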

\begin{proof}
If the map $\pi_P$ is injective, then we know that we may write $LO(G) = \overline{Orb_G(P)}$, and so only those points in $Orb_G(P)$ itself are possibly isolated in $LO(G)$, which can only happen if $P$ itself is isolated.

Supposing that $P$ is an isolated point, it follows that $P^{-1}$ is also an isolated point in $LO(G)$, and hence $P^{-1} \in Orb_G(P)$; so we may write $P^{-1} = gPg^{-1}$ for some $g \in G$, with $g$ different from the identity.  This is impossible, for supposing $g \in P$ yields (upon conjugation by $g$) $g \in gPg^{-1} = P^{-1}$.  Similarly, $g \in P^{-1}$ is impossible, so that $P$ is not isolated. 
\end{proof}

From Theorem \ref{th:acc}, we now have a bijection between certain normal subgroups of $F(G)$ and certain closed subsets of $LO(G)$.   Specifically, if $K$ is the kernel of the map $\pi_P$, we can associate to $K$ the closed set $\overline{Orb_G(P)}$.  Note that if $\pi_Q$ is some other map with kernel $K$, then $\overline{Orb_G(Q)}= \overline{Orb_G(P)}$, so that the closed set associated to $K$ is well-defined.   Inclusion of kernels $ker(\pi_P) \subset \ker(\pi_Q)$ yields a reverse inclusion of associated subsets, $\overline{Orb_G(Q)} \subset \overline{Orb_G(P)}$.

It is well known that the space $LO(G)$ is compact \cite{AS04}, and this fact has been used to great success: In \cite{PL06}, compactness is the key ingredient in showing that no group has countably many left orderings (which has been proven again recently in \cite{AR09}), and in \cite{DWM06}, compactness is used to show that a left orderable group is amenable if and only if it is locally indicable.  In our present setting, compactness of the space of left orderings yields the following:

If $G$ is a left orderable group, let $S_G$ denote the set of all normal subgroups of $F(G)$ that occur as the kernel of some map $\pi_P : F(G) \rightarrow Aut(G, <_P)$, where $P$ ranges over all positive cones in $LO(G)$ for some left orderable group $G$.  The set $S_G$ is partially ordered by inclusion.

\begin{proposition}
\label{prop:up}
Every chain in $S_G$ has an upper bound, in particular, $S_G$ has a maximal element.
\end{proposition}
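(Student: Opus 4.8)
The plan is to deduce the existence of a maximal element from Zorn's lemma, so the real content lies in producing an upper bound for an arbitrary chain in $S_G$. I would exploit the order-reversing correspondence recorded immediately above the statement: to each kernel $ker(\pi_P) \in S_G$ we attach the closed set $\overline{Orb_G(P)} \subset LO(G)$, and Theorem \ref{th:acc} guarantees that $ker(\pi_P) \subset ker(\pi_Q)$ holds precisely when $\overline{Orb_G(Q)} \subset \overline{Orb_G(P)}$.

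First I would fix a chain $\{ K_\alpha \}_{\alpha \in A}$ in $S_G$ and write each $K_\alpha = ker(\pi_{P_\alpha})$ for some $P_\alpha \in LO(G)$. Passing to the associated closed sets, the family $C_\alpha = \overline{Orb_G(P_\alpha)}$ is totally ordered by inclusion, since comparability of $K_\alpha$ and $K_\beta$ forces comparability (with reversed inclusion) of $C_\alpha$ and $C_\beta$. Each $C_\alpha$ is nonempty, as it contains $P_\alpha$, and because any finite subfamily of a chain has a smallest member equal to its own intersection, the family $\{ C_\alpha \}$ enjoys the finite intersection property.

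Next I would invoke the compactness of $LO(G)$: a family of closed sets with the finite intersection property in a compact space has nonempty total intersection, so there exists $Q \in \bigcap_{\alpha \in A} C_\alpha = \bigcap_{\alpha \in A} \overline{Orb_G(P_\alpha)}$. Since $Q \in \overline{Orb_G(P_\alpha)}$ for every $\alpha$, Theorem \ref{th:acc} yields $K_\alpha = ker(\pi_{P_\alpha}) \subset ker(\pi_Q)$ for every $\alpha$. Thus $ker(\pi_Q)$ is an element of $S_G$ containing every member of the chain, which is the desired upper bound; Zorn's lemma then furnishes a maximal element of $S_G$.

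I do not anticipate a serious obstacle here. The only points requiring care are the verification of the finite intersection property, which rests solely on the chain being totally ordered and each orbit closure being nonempty, and the translation between inclusions of kernels and inclusions of orbit closures, which is exactly the content of Theorem \ref{th:acc}. The essential input is that compactness converts the finite intersection property into a genuine common point $Q$, whose associated kernel then dominates the entire chain.
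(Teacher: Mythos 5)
Your proof is correct and is essentially the same as the paper's: both translate the chain of kernels into a nested family of orbit closures via Theorem \ref{th:acc}, use compactness of $LO(G)$ together with the finite intersection property to extract a common point $Q$, and observe that $\ker(\pi_Q)$ bounds the chain before invoking Zorn's lemma.
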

\begin{proof}
Let $T$ be a subset of $LO(G)$ such that $\{ \ker(\pi_P) \}_{P \in T}$ is a totally ordered subset of $S_G$.  Observe that $\overline{Orb_G(P)} \subset \overline{Orb_G(Q)}$ if and only if $ker(\pi_Q) \subset ker(\pi_P)$, and thus $\{\overline{Orb_G(P)}\}_{P \in T}$ is a nested collection of closed subsets of $LO(G)$.  In particular, this nested collection of sets has the finite intersection property.  For if $P_1, \cdots ,P_n$ is some finite subset of $T$, upon renumbering if necessary, we may assume that $\overline{Orb_G(P_1)} \subset \cdots \subset \overline{Orb_G(P_n)}$, from which it is obvious that $P_1$ is contained in their intersection.

Thus, the intersection 
\[\bigcap_{P \in T} \overline{Orb_G(P)} \]
is nonempty, as $LO(G)$ is compact.   Choosing any positive cone $R$ from this intersection yields a closed set $\overline{Orb_G(R)}$ that lies in $\overline{Orb_G(P)}$ for every $P$ in $T$, and hence $\ker(\pi_P) \subset \ker(\pi_R)$ for every $P$ in $T$.  It now follows from Zorn's lemma that $S_G$ contains a maximal element.
\end{proof}

The following is a standard definition in the theory of dynamical systems.
\begin{definition}
A nonempty set $U$ in $LO(G)$ is said to be a minimal invariant set if $U$ is closed and $G$-invariant, and for every closed $G$-invariant set $V$ in $LO(G)$, $U \cap V \neq \emptyset$ implies $U \subset V$.
\end{definition}

The equivalence of (1) and (2) in the following proposition is a standard result from the theory of dynamical systems (\cite{JV93}, pp. 69-70).

\begin{proposition}
\label{prop:sm}
  For any nonempty closed subset $U$ of $LO(G)$, the following are equivalent:
\begin{enumerate}
\item $U$ is a minimal invariant set
\item for every $P \in U$, $U = \overline{Orb_G(P)}$
\item $U =\overline{Orb_G(P)}$ for some $P \in LO(G)$ whose kernel is maximal in $S_G$.
\end{enumerate}
\end{proposition}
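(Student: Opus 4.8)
The plan is to take the equivalence of (1) and (2) as given, since this is the standard dynamical statement cited just above, so that it suffices to prove the equivalence of (2) and (3). The only substantive input is Theorem \ref{th:acc}, together with its order-reversing reformulation: for $P, Q \in LO(G)$ one has $\overline{Orb_G(Q)} \subset \overline{Orb_G(P)}$ if and only if $\ker(\pi_P) \subset \ker(\pi_Q)$, and consequently $\overline{Orb_G(P)} = \overline{Orb_G(Q)}$ if and only if $\ker(\pi_P) = \ker(\pi_Q)$. This is precisely the order-reversing, injective correspondence between the elements of $S_G$ and orbit-closures recorded before Proposition \ref{prop:up}, and it drives both implications.

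For (3) $\Rightarrow$ (2), I would fix $P$ with $U = \overline{Orb_G(P)}$ and $\ker(\pi_P)$ maximal in $S_G$, and take an arbitrary $R \in U$. Since $R \in \overline{Orb_G(P)}$, Theorem \ref{th:acc} gives $\ker(\pi_P) \subset \ker(\pi_R)$; as $\ker(\pi_R)$ also lies in $S_G$, maximality of $\ker(\pi_P)$ forces $\ker(\pi_P) = \ker(\pi_R)$. By the correspondence above this yields $\overline{Orb_G(R)} = \overline{Orb_G(P)} = U$, which is exactly condition (2).

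For (2) $\Rightarrow$ (3), since $U$ is nonempty I would choose any $P \in U$; by (2) we already have $U = \overline{Orb_G(P)}$, so it remains only to show that $\ker(\pi_P)$ is maximal in $S_G$. Suppose not: then there is $Q \in LO(G)$ with $\ker(\pi_P) \subsetneq \ker(\pi_Q)$. The correspondence gives $\overline{Orb_G(Q)} \subset \overline{Orb_G(P)} = U$, and since $Q \in \overline{Orb_G(Q)}$ we conclude $Q \in U$. Applying hypothesis (2) to this $Q$ gives $U = \overline{Orb_G(Q)}$, so that $P \in \overline{Orb_G(Q)}$; Theorem \ref{th:acc} then returns the reverse inclusion $\ker(\pi_Q) \subset \ker(\pi_P)$, contradicting $\ker(\pi_P) \subsetneq \ker(\pi_Q)$. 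Hence $\ker(\pi_P)$ is maximal and (3) holds. (Proposition \ref{prop:up} guarantees that maximal elements of $S_G$ exist at all, so that condition (3) is not vacuous.)

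The arguments are short once Theorem \ref{th:acc} is in hand, so I do not anticipate a genuine obstacle; the one point demanding care is keeping the order-reversal straight, namely that a larger kernel corresponds to a smaller orbit-closure, and making sure the contradiction in (2) $\Rightarrow$ (3) is airtight: the key move is that landing $Q$ inside $U$ lets hypothesis (2) be reapplied to $Q$, which forces the offending strict kernel inclusion to become an equality.
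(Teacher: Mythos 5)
Your proof of the equivalence of (2) and (3) is correct and is essentially the paper's own argument: both directions run through Theorem \ref{th:acc} and the order-reversing correspondence between kernels and orbit closures, and your contradiction argument in (2) $\Rightarrow$ (3) is just a rephrasing of the paper's direct derivation that the two kernels coincide. The only difference is that you delegate (1) $\Leftrightarrow$ (2) to the cited standard reference, whereas the paper also writes out the short two-line proof; since the paper itself flags that equivalence as standard (citing \cite{JV93}), this is not a gap.
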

\begin{proof}
(1) if and only if (2).  Suppose that $U$ is a minimal invariant set, and let $P \in U$ be given.  Then $\overline{Orb_G(P)} \subset U$, since $U$ is closed and $G$-invariant.  Since $U$ is small, this implies $U \subset \overline{Orb_G(P)}$, and so $U = \overline{Orb_G(P)}$.  Conversely, suppose that (2) is satisfied and let $V$ be some other closed, $G$-invariant set such that $U \cap V$ is nonempty.  Choose $Q \in U \cap V$, and observe that $U =  \overline{Orb_G(Q)} \subset V$, since $V$ is closed and $G$-invariant.  Therefore $U$ is a minimal invariant set.

(2) if and only if (3).  Suppose property (2) holds, and let $P \in U$ be given, and suppose that $ker(\pi_P) \subset ker(\pi_Q)$ for some $Q \in LO(G)$.  Then by Theorem \ref{th:acc} $Q \in \overline{Orb_G(P)}=U$, and hence, by condition (2),  $\overline{Orb_G(Q)} = U = \overline{Orb_G(P)}$.   It follows that $ker(\pi_P) = ker(\pi_Q)$ is maximal.  Conversely, suppose (3) and let $P \in U$ be given.   Then for any other $Q$ in $U = \overline{Orb_G(P)}$, we have $\ker(\pi_P) \subset ker(\pi_Q)$ by Theorem \ref{th:acc}.  Since $ker(\pi_P)$ is maximal, this gives $ker(\pi_Q) = ker(\pi_P)$ and (2) follows.
\end{proof}

We can now see that Proposition \ref{prop:up} mirrors a standard proof of the existence of minimal invariant sets, see for example \cite{JV93} Theorem 3.12.  It is also clear from the above characterization that $ker(\pi_P)$ is maximal if $P$ is the positive cone of a bi-ordering (so $\overline{Orb_G(P)} = \{ P \}$) , or if $<_P$ is a bi-ordering when restricted to some finite index subgroup $H \subset G$ (for then $\overline{Orb_G(P)}$ is finite).  

We may apply the notion of minimal invariant sets to provide yet another proof that no left orderable group has countably infinitely many left orderings.

\begin{proposition} Let $G$ be a left orderable group, and let $U$ be a minimal invariant subset of $LO(G)$.  Then $U$ is finite, or uncountable.
\end{proposition}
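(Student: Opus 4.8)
The plan is to reduce the statement to a standard fact of point-set topology: a nonempty compact Hausdorff space with no isolated points is uncountable. First I would record the topological properties of $U$. Being a minimal invariant set, $U$ is closed in $LO(G)$, and $LO(G)$ is compact. Moreover $LO(G)$ is Hausdorff: it sits inside $2^G = \{0,1\}^G$ with the product topology, and the complement $\{P \mid g \notin P\}$ of a subbasic set equals $U_{g^{-1}}$ (using $P \sqcup P^{-1} \sqcup \{1\} = G$), so the subbasis really does cut out the restricted product topology. Thus $U$, as a closed subspace of a compact Hausdorff space, is itself compact and Hausdorff. By Proposition \ref{prop:sm}, minimality gives $U = \overline{Orb_G(P)}$ for every $P \in U$; in particular every orbit is dense in $U$.

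Assuming $U$ is infinite, the first substantive step is to show that $U$ has no isolated points. Suppose some $P \in U$ were isolated, so $\{P\}$ is open in $U$. Since conjugation by each $g \in G$ is a homeomorphism of $U$, every point $gPg^{-1}$ of $Orb_G(P)$ is then isolated as well, and so $Orb_G(P) = \bigcup_{g} \{gPg^{-1}\}$ is open. Its complement $U \setminus Orb_G(P)$ is therefore closed and $G$-invariant, so by minimality it is either empty or all of $U$; as it omits $P$, it must be empty, whence $U = Orb_G(P)$. But a compact space all of whose points are isolated is discrete, and a discrete compact space is finite, contradicting that $U$ is infinite. Hence $U$ is perfect.

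The final step invokes the Baire category theorem in the form valid for compact Hausdorff spaces: every such space is a Baire space. Each singleton $\{x\} \subset U$ is closed (since $U$ is Hausdorff) and has empty interior (since $U$ has no isolated points), hence is nowhere dense. If $U$ were countable it would be a countable union of these nowhere dense sets, contradicting the Baire property. Therefore $U$ is uncountable, and combined with the previous paragraph this proves that $U$ is either finite or uncountable.

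I expect the isolated-point step to carry the dynamical content, though it is short once one uses that conjugation acts by homeomorphisms and that a minimal set admits no proper nonempty closed invariant subset. The one subtlety I would flag is that $U$ need not be metrizable, since $G$ may be uncountable; this forces the use of the compact-Hausdorff incarnation of Baire's theorem rather than its more familiar metric-space version. Should one wish to avoid Baire altogether, the alternative is to build a binary Cantor tree of nested nonempty open subsets of the perfect space $U$ with pairwise disjoint closures, producing a continuous injection of $2^{\mathbb{N}}$ into $U$ and hence uncountability directly.
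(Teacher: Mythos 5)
Your proof is correct, and it shares the paper's overall reduction --- both arguments split into ``an infinite minimal set cannot have an isolated point'' and ``an infinite compact Hausdorff space with no isolated points is uncountable'' --- but the two substantive steps are carried out differently. For the isolated-point step, the paper invokes its Proposition \ref{prop:sm}(2) (every orbit in a minimal set is dense) twice: an isolated $P$ forces the infinite orbit $Orb_G(P)$ to accumulate, by compactness, at some necessarily non-isolated $Q \in U$, and the density of $Orb_G(Q)$ then contradicts the isolation of $P$. You instead argue straight from the definition of minimality: the orbit of an isolated point is open (conjugation acts by homeomorphisms), so its complement in $U$ is closed and $G$-invariant, hence empty, so $U = Orb_G(P)$ is compact and discrete, hence finite. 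Your route is more self-contained --- it bypasses Proposition \ref{prop:sm} entirely and uses compactness only through ``compact discrete implies finite'' --- while the paper's version reuses machinery it has already built. For the uncountability step, the paper simply cites [Theorem 2-80, \cite{HY61}], whereas you prove the fact via the Baire category theorem for compact Hausdorff spaces (with the Cantor-tree construction as a fallback); your explicit verification that $LO(G)$ is Hausdorff, and your observation that metrizability may fail when $G$ is uncountable so the metric form of Baire's theorem does not suffice, are both worthwhile points that the paper leaves implicit.
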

\begin{proof}Suppose that $U$ is not finite.  If $U$ is infinite and contains no isolated points, then $U$ is uncountable, as $U$ is a compact Hausdorff space [Theorem 2-80, \cite{HY61}].  Thus, suppose that $U$ contains an isolated point, and we will arrive at a contradiction.  Note that in this context, ``isolated point'' means isolated in $U$, not isolated in $LO(G)$.

Choose $P$ in $U$.  By (2) of Proposition \ref{prop:sm}, $U = \overline{Orb_G(P)}$, and it follows that $P$ itself must be an isolated point (and hence every point in $Orb_G(P)$ is isolated).  Since $U$ is compact, the set of conjugates $Orb_G(P)$ must accumulate on some $Q \in U$, moreover, $Q$ does not lie in $Orb_G(P)$, since $Q$ is not isolated.  We again apply (2) of Proposition \ref{prop:sm} to find that $U = \overline{Orb_G(Q)}$, and it follows that $P$ cannot be an isolated point, a contradiction.
\end{proof}

\begin{corollary}
For any group $G$, $LO(G)$ is either finite or uncountable.
\end{corollary}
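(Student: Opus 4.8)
The plan is to leverage the immediately preceding proposition together with the structural results already established about minimal invariant sets. Since the proposition tells us that every minimal invariant subset of $LO(G)$ is either finite or uncountable, the strategy is to cover $LO(G)$ (or enough of it) by such minimal sets and then argue that a countably infinite total is impossible. First I would dispose of the trivial case: if $G$ is not left orderable then $LO(G) = \emptyset$, which is finite, so we may assume $G$ is left orderable and $LO(G) \neq \emptyset$.

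The key observation I would use is that $LO(G)$ always contains at least one minimal invariant set. This follows from the machinery already in place: by Proposition \ref{prop:up}, the partially ordered set $S_G$ of kernels possesses a maximal element $\ker(\pi_R)$, and by the equivalence of (1) and (3) in Proposition \ref{prop:sm}, the closed set $U = \overline{Orb_G(R)}$ is then a minimal invariant set. So such a $U$ genuinely exists inside $LO(G)$.

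With a minimal invariant set $U \subset LO(G)$ in hand, I would split into cases based on the preceding proposition. If $U$ is uncountable, then $LO(G) \supset U$ is immediately uncountable and we are done. So suppose instead that every minimal invariant set is finite; in particular $U$ is finite. The remaining work is to show that if $LO(G)$ is infinite it must then be uncountable. Here I would argue that $LO(G)$ itself, being infinite and compact Hausdorff, must accumulate somewhere, and I would try to produce a minimal invariant set that is infinite (hence uncountable by the proposition) by iterating the orbit-closure construction, exactly as in the proof just given: an infinite $LO(G)$ cannot be exhausted by finitely many finite minimal sets, and the accumulation forces an infinite minimal invariant set to appear.

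The main obstacle I anticipate is the bookkeeping in the final case --- ruling out the possibility that $LO(G)$ is countably infinite while every minimal invariant set is finite. The cleanest route is probably to observe that if $LO(G)$ is infinite and Hausdorff and compact then it has a nonisolated point $Q$ (a countably infinite compact Hausdorff space would be homeomorphic to a countable ordinal and still has accumulation points), and then to locate a minimal invariant set inside $\overline{Orb_G(Q)}$ that cannot reduce to a single finite orbit, contradicting finiteness. I would expect to have to be careful that the minimal invariant set extracted is genuinely infinite rather than collapsing onto the isolated orbit, mirroring the accumulation argument in the proposition above; once that is secured, the preceding proposition immediately upgrades ``infinite'' to ``uncountable,'' completing the proof.
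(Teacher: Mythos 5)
Your first half reproduces the paper's argument: Proposition \ref{prop:up} produces a positive cone $P$ whose kernel is maximal in $S_G$, Proposition \ref{prop:sm} identifies $U=\overline{Orb_G(P)}$ as a minimal invariant set, and the preceding proposition says $U$ is finite or uncountable, which settles the uncountable case. The gap is in your final case, and it is genuine: when every minimal invariant set is finite, no purely topological or dynamical argument can manufacture an infinite minimal invariant set, nor otherwise rule out a countably infinite $LO(G)$. The step you flag as delicate --- locating, inside $\overline{Orb_G(Q)}$ for a non-isolated $Q$, a minimal invariant set that ``cannot reduce to a single finite orbit'' --- is simply false at this level of generality. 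Take $G=\mathbb{Z}^2$: conjugation acts trivially on $LO(\mathbb{Z}^2)$, so $\overline{Orb_G(Q)}=\{Q\}$ for every $Q$, every minimal invariant set is a singleton, and yet $LO(\mathbb{Z}^2)$ is infinite. More abstractly, the configuration you are trying to exclude is perfectly consistent as a dynamical system: the convergent sequence $\{0\}\cup\{1/n : n\geq 1\}$ with the trivial group action is a countably infinite compact Hausdorff space all of whose minimal invariant sets are finite. So ``all minimal sets finite and the space infinite'' yields no contradiction; finiteness of minimal sets says nothing about the cardinality of the whole space, and in the $\mathbb{Z}^2$ example the correct conclusion (uncountability) holds for reasons your argument cannot see.

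What closes this case in the paper is algebra, not iteration of the orbit-closure argument. If $\overline{Orb_G(P)}$ is finite then $Orb_G(P)$ is finite, so the stabilizer $Stab_G(P)$ of $P$ under conjugation is a finite-index subgroup of $G$, and it is bi-ordered by the restriction of $<_P$, since its elements conjugate $P$ to itself. By \cite{RR02} this forces $G$ to be locally indicable, and the paper then invokes \cite{ZV97} to conclude that such a group cannot have countably infinitely many left orderings. These two external inputs (or something equivalent to them) are indispensable: they convert the finite-minimal-set case into a statement about the group itself, which is exactly the information that the compactness and accumulation arguments cannot supply. Without this step your proof cannot be completed.
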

\begin{proof}
Given any left orderable group $G$, by Proposition \ref{prop:up}, there exists $P \in LO(G)$ whose kernel is maximal in $S_G$.  Correspondingly, the set $\overline{Orb_G(P)}$ is a minimal invariant set, and hence it is either finite, or uncountable. 

Assuming $\overline{Orb_G(P)}$ is finite, we must have that $Orb_G(P)$ is finite, and hence the stabilizer $Stab_G(P)$ is a finite index subgroup of $G$ that is bi-ordered by the restriction of the left ordering $<_P$.  It follows that $G$ is locally indicable \cite{RR02}, and hence has uncountably many left orderings, by \cite{ZV97}.
\end{proof}

It should be noted that this proof is very similar to the proof of Peter Linnell, given in \cite{PL06}.  The crucial difference in our proof is that some difficult topological arguments have been replaced by an application of Zorn's lemma.

It does not appear that a clean topological statement will characterize precisely those closed sets that occur as $\overline{Orb_G(P)}$ for some ordering whose associated kernel is minimal in $S_G$.  We do, however, have the following observation.

\begin{proposition}
Let $G$ be a left orderable group, and let $P$ in $LO(G)$ be an isolated point.  Then $ker(\pi_P)$ is minimal in $S_G$.
\end{proposition}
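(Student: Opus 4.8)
The plan is to show that $\ker(\pi_P)$ admits no proper subgroup lying inside it within $S_G$. Since every element of $S_G$ has the form $\ker(\pi_R)$ for some $R \in LO(G)$, it suffices to prove that $\ker(\pi_R) \subset \ker(\pi_P)$ forces $\ker(\pi_R) = \ker(\pi_P)$.

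First I would translate the inclusion $\ker(\pi_R) \subset \ker(\pi_P)$ into a topological statement by means of Theorem \ref{th:acc}: taking the roles $P \mapsto R$ and $Q \mapsto P$ in that theorem, the inclusion $\ker(\pi_R) \subset \ker(\pi_P)$ holds if and only if $P \in \overline{Orb_G(R)}$. Thus the hypothesis gives that $P$ lies in the closure of the orbit $Orb_G(R)$.

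Next I would invoke the hypothesis that $P$ is an isolated point. Since $\{P\}$ is then an open neighborhood of $P$, and $P$ belongs to $\overline{Orb_G(R)}$, the open set $\{P\}$ must meet $Orb_G(R)$; hence $P \in Orb_G(R)$, so $P = gRg^{-1}$ for some $g \in G$. Conjugate positive cones have identical orbits, so $Orb_G(P) = Orb_G(gRg^{-1}) = Orb_G(R)$, and therefore $\overline{Orb_G(P)} = \overline{Orb_G(R)}$. Applying Theorem \ref{th:acc} in both directions to this equality of closures then yields the double inclusion $\ker(\pi_P) \subset \ker(\pi_R) \subset \ker(\pi_P)$, whence $\ker(\pi_P) = \ker(\pi_R)$, as required.

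The argument is short, and I expect no serious obstacle: the only delicate point is the step that upgrades membership of $P$ in the orbit \emph{closure} $\overline{Orb_G(R)}$ to membership of $P$ in the orbit $Orb_G(R)$ itself. This is precisely where isolation of $P$ is used, and it is immediate once one observes that $\{P\}$ is an open set meeting $Orb_G(R)$. Everything else is a formal consequence of the dictionary between kernel inclusions and reverse orbit-closure inclusions supplied by Theorem \ref{th:acc}.
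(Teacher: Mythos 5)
Your proof is correct and follows essentially the same path as the paper's: translate $\ker(\pi_R) \subset \ker(\pi_P)$ into $P \in \overline{Orb_G(R)}$ via Theorem \ref{th:acc}, use isolation of $P$ to upgrade this to $P \in Orb_G(R)$, and then use conjugacy plus Theorem \ref{th:acc} again to get the reverse kernel inclusion. The only cosmetic difference is that you pass through the equality of orbit closures, while the paper simply notes $R \in \overline{Orb_G(P)}$ directly from conjugacy; the substance is identical.
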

\begin{proof}
Suppose that $ker(\pi_Q) \subset ker(\pi_P)$ for some $Q \in LO(G)$.  Then $P \in \overline{Orb_G(Q)}$ by Theorem \ref{th:acc}, but $P$ is isolated, so $P \in Orb_G(Q)$. Therefore $Q$ is conjugate to $P$, and so $Q \in  \overline{Orb_G(P)}$, and  $ker(\pi_P) \subset ker(\pi_Q)$ by  Theorem \ref{th:acc}.  Thus $ker(\pi_Q)$ is equal to $ker(\pi_P)$, so that  $ker(\pi_P)$ is minimal.
\end{proof}

Not every minimal kernel in $S_G$ corresponds to an isolated point in $LO(G)$.   In the next section, we will see that with $G=F_n$, the free group on $n$ generators, $S_{F_n}$ contains a minimal kernel (\cite{KO79}, \cite{KO83}) that does not correspond to an isolated point.

\section{Examples}

\subsection{The free group}
It now follows easily from work of Kopytov that $LO(F_n)$ contains a dense orbit under the conjugation action by $F_n$.  This case appears to be the first known example of a left ordering (of any group) whose orbit is dense in the space of left orderings.
\begin{corollary}
\label{cor:iso}
Let $F_n$ denote the free group on $n>1$ generators.  There exists $P$ such that  $\overline{Orb_{F_n}(P)} = LO(F_n)$.
\end{corollary}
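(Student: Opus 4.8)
The plan is to apply Corollary \ref{cor:dense1}, which reduces the statement entirely to an algebraic fact: $\overline{Orb_{F_n}(P)} = LO(F_n)$ holds precisely when the map $\pi_P : F(F_n) \to Aut(F_n, <_P)$ is injective. Thus it suffices to exhibit a single left ordering $P$ of $F_n$ for which $\ker(\pi_P)$ is trivial, and the entire content of the corollary is then carried by a theorem of Kopytov. Once such a $P$ is found, Corollary \ref{cor:dense1} yields $\overline{Orb_{F_n}(P)} = LO(F_n)$ with no further argument.

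First I would recall the relevant result from \cite{KO79}, \cite{KO83}. For $n > 1$, Kopytov produces a faithful representation of the free lattice-ordered group $F(F_n)$ as order-preserving automorphisms of a chain on which $F_n$ acts regularly (equivalently, a left ordering $<_P$ of $F_n$ for which the smallest $l$-subgroup of $Aut(F_n, <_P)$ containing the left-multiplication maps $\{ g_P : g \in F_n \}$ is itself free as a lattice-ordered group on those generators). Second, I would identify this representation with the map $\pi_P$ of the present paper: by construction $\pi_P$ is exactly the $l$-homomorphism $F(F_n) \to Aut(F_n, <_P)$ determined by $g \mapsto g_P$, so Kopytov's faithfulness is precisely the assertion $\ker(\pi_P) = \{1\}$. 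Third, I would feed this $P$ into Corollary \ref{cor:dense1} to conclude $\overline{Orb_{F_n}(P)} = LO(F_n)$.

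The main obstacle is one of translation rather than of new mathematics: one must match the exact formulation of Kopytov's theorem to the hypothesis of Corollary \ref{cor:dense1}. Concretely, Kopytov's faithful representation must be read as the \emph{left-regular} representation on the chain $(F_n, <_P)$, the same representation used to construct $F(F_n)$ here, rather than faithfulness on an abstract unrelated chain. If Kopytov's statement is phrased as an embedding into $Aut(\Omega, <)$ with $F_n$ acting transitively, then a short transport step is needed — choosing a basepoint identifies $\Omega$ with $F_n$ and the $F_n$-action with left multiplication, and the order on $\Omega$ pulls back to the left order $<_P$ on $F_n$. One should also take care with the left-versus-right convention (passing via $g \mapsto g^{-1}$ if Kopytov works on the right), though for the free group this is routine.

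Once this identification is secured, the density of the orbit is immediate: injectivity of $\pi_P$ together with Corollary \ref{cor:dense1} gives $\overline{Orb_{F_n}(P)} = LO(F_n)$, and the proof is complete. I would also remark that this is the promised example of a minimal kernel in $S_{F_n}$ — namely the trivial kernel $\{1\}$ — that does not correspond to an isolated point of $LO(F_n)$, since $\overline{Orb_{F_n}(P)}$ is all of $LO(F_n)$ and hence is not a single orbit of a bi-ordering.
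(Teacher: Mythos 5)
Your proposal is correct and takes essentially the same route as the paper: cite Kopytov's theorem (\cite{KO79}, \cite{KO83}) to obtain a left ordering $P$ of $F_n$ with $\ker(\pi_P) = \{1\}$, then apply Corollary \ref{cor:dense1} to conclude $\overline{Orb_{F_n}(P)} = LO(F_n)$. The extra care you take in matching Kopytov's formulation to the left-regular representation $\pi_P$ is a reasonable elaboration of what the paper leaves implicit in its one-line proof.
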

\begin{proof}
There exists a left ordering of $F_n$ with positive cone $P$ such that $\pi_P : F(G) \rightarrow Aut(G, <_P)$ is injective (\cite{KO79}, \cite{KO83}), and we apply Corollary \ref{cor:dense1}.
\end{proof}

We may apply Proposition \ref{prop:noiso} and Theorem \ref{th:basic} to arrive at a new proof of the following corollary (see \cite{SM85}, \cite{NF07} for alternate proofs).

\begin{corollary}
The space $LO(F_n)$ has no isolated points for $n >1$.   Equivalently, $F(F_n)$ has no basic elements for $n>1$.
\end{corollary}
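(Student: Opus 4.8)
The plan is to chain together two results that the paper has already established. The final corollary has two logically equivalent assertions: that $LO(F_n)$ has no isolated points, and that $F(F_n)$ has no basic elements. Theorem~\ref{th:basic} already tells us these two statements are equivalent for any left orderable group, so I only need to prove one of them, and I will target the topological statement about $LO(F_n)$.

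\textbf{First step.} I would invoke Corollary~\ref{cor:iso}, which was proven immediately above using Kopytov's result: there exists a positive cone $P$ on $F_n$ (for $n>1$) such that $\overline{Orb_{F_n}(P)} = LO(F_n)$. By Corollary~\ref{cor:dense1}, the existence of such a dense orbit is equivalent to the injectivity of the map $\pi_P : F(F_n) \rightarrow Aut(F_n, <_P)$. So the hypothesis needed to run the machinery is already in hand.

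\textbf{Second step.} With an injective $\pi_P$ in hand, I would apply Proposition~\ref{prop:noiso} directly: whenever a left orderable group $G$ admits some $P \in LO(G)$ with $\pi_P$ injective, the space $LO(G)$ contains no isolated points. Taking $G = F_n$ gives that $LO(F_n)$ has no isolated points for $n>1$, which is the first assertion of the corollary.

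\textbf{Third step.} To close out the equivalent formulation, I would appeal to Theorem~\ref{th:basic}, which states that $F(G)$ contains a basic element if and only if $LO(G)$ contains an isolated point. Since $LO(F_n)$ has no isolated points, $F(F_n)$ has no basic elements, completing the proof. Honestly, I do not anticipate a genuine obstacle here: all the substantive work—both the appeal to Kopytov via Corollary~\ref{cor:iso} and the structural results linking injectivity, dense orbits, isolated points, and basic elements—has been done earlier in the paper. The only thing to be careful about is making explicit that the $n>1$ hypothesis is exactly what Corollary~\ref{cor:iso} requires, so that the chain of implications is valid, and that the ``equivalently'' in the statement is precisely the content of Theorem~\ref{th:basic} rather than something requiring a separate argument.
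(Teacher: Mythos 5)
Your proof is correct and matches the paper's own argument, which likewise combines Kopytov's injectivity result (underlying Corollary~\ref{cor:iso}) with Proposition~\ref{prop:noiso} and Theorem~\ref{th:basic}. The only cosmetic difference is that you recover injectivity of $\pi_P$ from the dense orbit via Corollary~\ref{cor:dense1}, whereas the paper takes injectivity directly from Kopytov's theorem; this detour is harmless.
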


Recall that the Conradian soul of a left ordering is the largest convex subgroup on which the restriction ordering is Conradian \cite{NF07}.  It was shown in \cite{SM85} and \cite{MD93} that the construction of Kopytov can be improved so that the map $\pi_P$ is injective, and the left ordering $<_P$ of $F_n$ has no convex subgroups.  Consequently, the Conradian soul of the ordering $<_P$ must be trivial, and so $P$ is an accumulation point of its own conjugates in $LO(F_n)$ \cite{NF07}, \cite{AC08}.   

It is worth noting that McCleary's construction in \cite{SM85} of a faithful $o-2$ transitive action of $F(F_n)$ on some linearly ordered set is much stronger than is needed to conclude that $F(F_n)$ has no basic elements; yet \cite{SM85} appears to contain the first proof of this fact appearing in the literature.

\subsection{Left orderable groups with all left orderings Conradian}

In the case that all left orderings of a given left orderable group $G$ are Conradian, we may highlight two cases of interest.  First we will observe that no finitely generated group $G$, all of whose left orderings are Conradian, can have a dense orbit in $LO(G)$.  Second, we will show that if we allow the group to be infinitely generated, then it may be the case that every orbit in $LO(G)$ is dense.  Recall that, for example, every left ordering of a torsion free locally nilpotent group is Conradian \cite{AJC72}, \cite{MR77}.

An element $g$ in a left ordered group $G$ with ordering $<$ is cofinal if for every $h$ in $G$, there exists $n \in \mathbb{Z}$ such that $g^{-n} < h < g^n$.  

\begin{proposition}
\label{prop:cof}
Suppose that $G$ is a left orderable group, and suppose that $g$ is cofinal in $<_P$, for some $P \in LO(G)$.  Then $P^{-1}$ is not in $\overline{Orb_G(P)}$.
\end{proposition}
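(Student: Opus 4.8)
The plan is to separate $P^{-1}$ from $Orb_G(P)$ using Theorem \ref{th:acc}. First I would dispose of the trivial half: exactly the computation in Proposition \ref{prop:noiso} shows $P^{-1}\neq fPf^{-1}$ for every $f\in G$ (if $f\in P$ then conjugating gives $f\in fPf^{-1}=P^{-1}$, and $f\in P^{-1}$ is excluded symmetrically), so $P^{-1}\notin Orb_G(P)$ with no hypothesis on $g$. Hence the content of the proposition is that $P^{-1}$ is not an \emph{accumulation} point of the orbit, which by Theorem \ref{th:acc} is precisely the statement $\ker(\pi_P)\not\subseteq\ker(\pi_{P^{-1}})$. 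Unwinding the subbasis, it is equivalent to exhibit finitely many $<_P$-negative elements $a_1,\dots,a_n$ (so that $\bigcap_{j}U_{a_j}$ is a neighbourhood of $P^{-1}$) such that for every $f\in G$ one has $f^{-1}a_jf<_P1$ for some $j$; then no conjugate $fPf^{-1}$ meets $\bigcap_j U_{a_j}$.

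Next I would isolate what cofinality buys. Assume $g>_P1$ (otherwise replace $g$ by the equally cofinal $g^{-1}$). For each $x\in G$ the two-sided sequence $(g^{k}x)_{k\in\Z}$ is $<_P$-monotone, strictly increasing precisely when $x^{-1}gx>_P1$, since $(g^{k}x)^{-1}g(g^{k}x)=x^{-1}gx$ does not depend on $k$; and cofinality of $g$ forces each such sequence to be cofinal in both directions in $(G,<_P)$. Thus left multiplication by $g$ is a fixed-point-free $<_P$-automorphism all of whose orbits are cofinal. In the favourable case that $g$ is \emph{conjugation-positive}, meaning $f^{-1}gf>_P1$ for all $f$ (this happens, for example, whenever the cofinal $g$ is a lift of a generator under a homomorphism $G\to\Z$, as for the Conradian orderings treated in the next subsection), the argument collapses: one takes $a_1=g^{-1}$, notes $Orb_G(P)\subseteq U_{g}$, and since $U_g$ is clopen with $P^{-1}\notin U_g$, concludes $P^{-1}\notin\overline{Orb_G(P)}$ at once.

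In general $g$ need not be conjugation-positive. The element $a_1=g^{-1}$ still handles every $f$ with $f^{-1}gf>_P1$, because then $f^{-1}a_1f=(f^{-1}gf)^{-1}<_P1$. The remaining $f$ are those with $gf<_Pf$; these form a left $\langle g\rangle$-invariant set, and for each such $f$ the orbit $g^{k}f$ decreases cofinally. To cover them I would use conjugates $a_i=s_i^{-1}g^{-1}s_i$, choosing the $s_i$ so that each $a_i<_P1$, and exploit cofinality to trap an arbitrary $f$ between two powers of $g$ and thereby assign it to one of finitely many such conjugates. Granting the family, the proof concludes cleanly: the element $x=(\bigwedge_{j}a_j)\vee1\in F(G)$ satisfies $\pi_P(x)(h)=\max\{\min_j a_jh,\;h\}=h$ for all $h$, because the covering property at $f=h$ gives some $a_jh<_Ph$; so $x\in\ker(\pi_P)$. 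On the other hand every $a_j>_{P^{-1}}1$, whence $\pi_{P^{-1}}(x)(1)=\min^{P^{-1}}_j\{a_j\}>_{P^{-1}}1\neq1$, so $x\notin\ker(\pi_{P^{-1}})$ (cf.\ Corollary \ref{cor:1}), and Theorem \ref{th:acc} finishes the proof.

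The hard part is the finite extraction in the previous paragraph: cofinality gives, for each individual $f$, a bound $g^{-m}<_Pf<_Pg^{m}$, but the exponent $m$ is not uniform in $f$, so turning the pointwise trapping into a single finite subfamily $a_1,\dots,a_n$ is the crux. This is a compactness-type step, and it is exactly where the cofinal/monotone orbit structure of $g$ must be used essentially; the favourable conjugation-positive case above is the degenerate instance in which a single $a_1$ already suffices.
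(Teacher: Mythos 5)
There is a genuine gap, and it sits exactly where you flag it: the ``finite extraction'' step is never carried out, so the proposal is incomplete as written. But the deeper problem is that the case distinction driving that step is illusory. Your ``favourable case'' --- $g$ conjugation-positive, i.e.\ $f^{-1}gf >_P 1$ for all $f$ --- is not a special case at all: it holds automatically for every cofinal $g >_P 1$, and this is precisely what the paper's proof establishes in its first few lines. Indeed, if $f >_P 1$, cofinality gives $n$ with $f <_P g^n$, so $f^{-1}g^n \in P$; since also $f \in P$, the product $(f^{-1}g^n)f = f^{-1}g^nf$ lies in $P$, i.e.\ $(f^{-1}gf)^n >_P 1$; and since in a left ordering a nonpositive element has nonpositive powers, this forces $f^{-1}gf >_P 1$. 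If $f <_P 1$, cofinality gives $n$ with $g^{-n} <_P f$, i.e.\ $g^nf \in P$, and since $f^{-1} \in P$ the product $f^{-1}(g^nf) \in P$, with the same conclusion. So the set of $f$ with $gf <_P f$, which you propose to cover by finitely many conjugates $s_i^{-1}g^{-1}s_i$, is empty, and no compactness-type argument is needed or possible to formulate: you were trying to prove the hard step of a proof whose hard step does not exist.

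Once this two-line lemma is in hand, your own favourable-case ending finishes the proof, and is in fact a pleasant alternative to the paper's: $Orb_G(P) \subset U_g$, the set $U_g$ is clopen (its complement in $LO(G)$ is $U_{g^{-1}}$, since every positive cone contains exactly one of $g, g^{-1}$), and $P^{-1} \in U_{g^{-1}}$, so $P^{-1} \notin \overline{Orb_G(P)}$ without invoking Theorem \ref{th:acc} at all. The paper instead packages the same fact algebraically: $gh >_P h$ for all $h$ gives $g \wedge 1 \in \ker(\pi_P)$, while $gh <_{P^{-1}} h$ for all $h$ gives $g \wedge 1 \notin \ker(\pi_{P^{-1}})$, so $\ker(\pi_P) \not\subset \ker(\pi_{P^{-1}})$ and Theorem \ref{th:acc} applies. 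Either ending is correct; what your proposal lacks is the observation that cofinality alone already delivers the hypothesis that both endings need.
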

\begin{proof}
Suppose that $g>_P1$ is cofinal, and let $1<_P h \in G$ be given.  Choose an integer $n$ so that $h <_P g^n$, and observe that $1<_P h^{-1}g^n$, and hence $1<_Ph^{-1}g^nh$, since $h$ is positive.  Therefore $1<_Ph^{-1}gh$, and so $h <_P gh$.  On the other hand, if $h$ is negative, we similarly conclude that $h <_P gh$, and thus $\pi_P(g \wedge 1)(h) = h$ for all $h \in G$, so that $g \wedge 1 \in ker(\pi_P)$.  On the other hand, in the reverse ordering with positive cone $P^{-1}$, we find $h >_{P^{-1}} gh$ for all $h \in G$, so that $\pi_{P^{-1}}(g \wedge 1)(h) <_{P^{-1}} h$ for all $h \in G$, so that $g \wedge 1 \neq 1$ in $F(G)$.  Thus the map $\pi_P$ is not injective, and in particular, $ker(\pi_P)$ is not contained in $\ker(\pi_{P^{-1}})$.  By Theorem \ref{th:acc}, $P^{-1}$ is not in $\overline{Orb_G(P)}$.
\end{proof}

\begin{corollary}
Let $G$ be a finitely generated group, all of whose left orderings are Conradian.  Then $LO(G)$ does not contain a dense orbit.
\end{corollary}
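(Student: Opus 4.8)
The plan is to show that for every $P \in LO(G)$ the reverse ordering $P^{-1}$ fails to lie in $\overline{Orb_G(P)}$. Since $P^{-1}$ is again the positive cone of a left ordering of $G$, this yields $\overline{Orb_G(P)} \neq LO(G)$, and as $P$ is arbitrary, no orbit can be dense. In view of Proposition \ref{prop:cof}, the whole argument reduces to a single claim: for each $P$, the Conradian ordering $<_P$ admits a cofinal element. Producing such an element is where I would spend all the effort.

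To find a cofinal element, I would first locate the largest proper convex subgroup of $(G,<_P)$. The convex subgroups of any left-ordered group form a chain under inclusion, so the union $C$ of all \emph{proper} convex subgroups is again a convex subgroup. Here finite generation is essential: were $C = G$, then each of the finitely many generators of $G$ would lie in some proper convex subgroup, and—because these subgroups form a chain—the largest one among them would contain all the generators, exhibiting $G$ itself as a proper convex subgroup, a contradiction. Hence $C \neq G$, so $C$ is the maximum proper convex subgroup and the pair $C \prec G$ is a jump.

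Next I would bring in the Conradian hypothesis through the structure theory of Conradian orderings: at every jump $C \prec C'$ the subgroup $C$ is normal in $C'$ and the quotient $C'/C$ carries an Archimedean total ordering \cite{NF07}. Applied to the jump $C \prec G$, this makes $G/C$ an Archimedean ordered group. Now pick any $g >_P 1$ with $g \notin C$ (possible since $C \neq G$, after replacing an element by its inverse if needed). Its image $\bar g$ is a nontrivial positive element of the Archimedean group $G/C$, hence cofinal there, so for every $h \in G$ there is an integer $n$ with $\bar g^{-n} < \bar h < \bar g^{n}$. Because $C$ is convex, a strict inequality between distinct cosets lifts to a strict inequality between representatives, giving $g^{-n} <_P h <_P g^{n}$; thus $g$ is cofinal in $<_P$. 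With such a $g$ in hand, Proposition \ref{prop:cof} immediately yields $P^{-1} \notin \overline{Orb_G(P)}$.

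The hard part will be the verification that a cofinal element exists, and this rests squarely on the two ingredients above: the finite-generation argument that manufactures a maximum proper convex subgroup, and the Conradian structure theorem identifying the top quotient $G/C$ as Archimedean. Once those are secured, transporting cofinality from $G/C$ back to $G$ is a routine consequence of convexity, and the appeal to Proposition \ref{prop:cof} to finish is immediate. I do not anticipate any difficulty in the final assembly—only in stating the convex-subgroup structure cleanly enough to apply Conrad's theorem without circularity.
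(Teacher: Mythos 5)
Your proposal is correct and follows essentially the same route as the paper: use finite generation to produce the maximal proper convex subgroup $C$, invoke the Conradian structure theorem to see that the induced ordering on $G/C$ is Archimedean, conclude that every element of $G \setminus C$ is cofinal in $<_P$, and finish with Proposition \ref{prop:cof} applied to $P$ and $P^{-1}$. The paper's proof is just a terser version of this argument, leaving implicit the chain-of-convex-subgroups and lifting details that you spell out.
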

\begin{proof}
Let $P$ be any positive cone in a finitely generated group $G$.  Since the associated left ordering $<_P$ is Conradian, there exists a convex subgroup $C \subset G$ such that $G/C$ is abelian, and the induced ordering on $G/C$ is Archimedean.  Every element in $G \setminus C$ is then cofinal in the left ordering $<_P$, and the claim follows.
\end{proof}

Next, we consider the group 
\[ T_{\infty} =  \langle x_i, i \in \mathbb{N} : x_{i+1}x_i x_{i+1} = x_i^{-1}, x_ix_j = x_j x_i \mbox{ for $|i-j|>1$} \rangle. \]

Note that each subgroup $T_n = \langle x_1, \cdots ,x_n \rangle $ has only $2^n$ left orderings, as it is one of the Tararin groups, as described in \cite{KME96}, Theorem 5.2.1.  Moreover, the convex subgroups of any left ordering of $T_n$ are precisely $T_i \subset T_n$ for $i \leq n$, and so the convex subgroups of $T_{\infty}$ are exactly $T_i$ for $i \in \mathbb{N}$, for any left ordering of $T_{\infty}$.  The orderings of $T_{\infty}$ are all Conradian, with convex jumps $T_{i+1} / T_i \cong \mathbb{Z}$.   Given any positive cone $P \in LO(T_{\infty})$, it is therefore determined by the signs of the generators $x_i$, which we record in a sequence
\[\varepsilon = ( \pm1, \pm1, \cdots )
\]
writing $+1$ in the $i$-th position if $x_i$ is in $P$, and $-1$ otherwise.  We then write $P = P_{\varepsilon}$.  With this notation, we observe that $x_{i+1}  P_{\varepsilon} x_{i+1}^{-1} =  P_{\varepsilon'}$, where $\varepsilon'$ differs from $\varepsilon$ only in the sign of the $i$-th entry (this follows from the defining relations of the group $T_{\infty}$, and corresponds to the idea of ``flipping'' the ordering on the $i$-th convex jump).

\begin{proposition}
Let $P \in LO(T_{\infty})$ be any positive cone.  Then $LO(T_{\infty}) = \overline{Orb_{T_{\infty}}(P)}$.
\end{proposition}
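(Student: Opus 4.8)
The plan is to show directly that the orbit $Orb_{T_\infty}(P)$ is dense, i.e. that it meets every nonempty basic open set of $LO(T_\infty)$; passing to closures then gives the claim. Write $P = P_\varepsilon$ for its defining sign sequence $\varepsilon = (\varepsilon_1, \varepsilon_2, \dots)$. The central observation, which I would isolate first, is that a basic open set constrains only finitely many of the signs. Indeed, a nonempty basic open set has the form $V = U_{g_1} \cap \cdots \cap U_{g_k}$, and each $g_j$ lies in some $T_{n_j}$; setting $N = \max_j n_j$, all the $g_j$ lie in the convex subgroup $T_N$. Whether a positive cone $P_\delta$ belongs to $V$ therefore depends only on the restriction of $<_{P_\delta}$ to $T_N$, and since the convex subgroups of every ordering of $T_\infty$ are exactly the $T_i$ with jumps $T_{i+1}/T_i \cong \mathbb{Z}$, this restriction is precisely the ordering of $T_N$ recorded by the truncated sign sequence $(\delta_1, \dots, \delta_N)$. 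Hence membership in $V$ is determined by the first $N$ entries of $\delta$ alone.

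Next I would use the flipping observation already recorded above: conjugation by $x_{i+1}$ sends $P_\varepsilon$ to $P_{\varepsilon'}$, where $\varepsilon'$ differs from $\varepsilon$ only in the $i$-th entry, and this holds for every $\varepsilon$. Consequently, for any finite set of distinct indices $\{i_1, \dots, i_m\}$, conjugation by the product $x_{i_1+1} \cdots x_{i_m+1}$ toggles exactly the signs in those positions. It follows that $Orb_{T_\infty}(P_\varepsilon)$ contains every $P_\delta$ whose sign sequence $\delta$ differs from $\varepsilon$ in only finitely many positions.

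To finish, given a nonempty basic open set $V$ as above, choose any $P_\delta \in V$, and form the sign sequence $\delta'$ that agrees with $\delta$ in positions $1, \dots, N$ and with $\varepsilon$ in all positions beyond $N$. By the first paragraph, $P_{\delta'} \in V$, since $\delta'$ and $\delta$ share their first $N$ entries; by the second paragraph, $P_{\delta'} \in Orb_{T_\infty}(P_\varepsilon)$, since $\delta'$ and $\varepsilon$ differ in at most the first $N$ positions. Thus $Orb_{T_\infty}(P_\varepsilon) \cap V \neq \emptyset$, and the orbit is dense, which (being true for arbitrary $P$) shows every orbit in $LO(T_\infty)$ is dense. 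The one point requiring genuine care — and the step I would treat as the crux — is the reduction in the first paragraph: one must verify that the ordering of $T_\infty$ restricted to the convex subgroup $T_N$ is faithfully recorded by the first $N$ signs, so that no element of $T_N$ can detect anything about the tail of the sequence. Everything else is bookkeeping with the toggling action.
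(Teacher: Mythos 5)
Your proof is correct and follows essentially the same route as the paper's: both rest on the observation that conjugation by $x_{i+1}$ flips exactly the $i$-th sign, together with the fact that a basic open set only constrains the ordering on some finite $T_N$. The only cosmetic difference is that you toggle all the needed signs at once by conjugating by a product of generators, where the paper flips them one at a time by induction.
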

\begin{proof}
Let $P_{\varepsilon_1}$ and $P_{\varepsilon_2}$ be two positive cones in $LO(T_{\infty})$.  It is enough to show that for every $n \in \mathbb{N}$, there exists $g \in T_{\infty}$ such that $gP_{\varepsilon_1}g^{-1} \cap T_n = P_{\varepsilon_2} \cap T_n$, so that the associated left orderings agree upon restriction to the subgroup $T_n$. 

The proof is a simple induction.  First, note that there exists a conjugate of  $P_{\varepsilon_1}$ that agrees with $P_{\varepsilon_2}$ upon restriction to the subgroup $T_1 = \langle x_1 \rangle$: if $\varepsilon_1$ and $\varepsilon_2$ agree in the first entry, then $P_{\varepsilon_1}$ and $P_{\varepsilon_2}$ agree on $T_1$, whereas if $\varepsilon_1$ and $\varepsilon_2$ disagree in the first entry, then $x_2P_{\varepsilon_1}x_2^{-1}$ and $P_{\varepsilon_2}$ agree on $T_1$.

 For induction, suppose that $Q$ is a conjugate of $P_{\varepsilon_1}$ with associated sequence $\varepsilon_1'$ first differing from $\varepsilon_2$ in the $n$-th entry, so that $Q \cap T_{n-1} = P_{\varepsilon_2} \cap T_{n-1}$.  Then $x_{n+1} Q x_{n+1}^{-1}$ will have associated sequence $\varepsilon_1''$ that agrees with $\varepsilon_2$ in the sign of the $n$-th entry.  Thus the conjugate $x_{n+1} Q x_{n+1}^{-1}$ of $P_{\varepsilon_1}$ agrees with $P_{\varepsilon_2}$ upon restriction to the subgroup $T_n$, and the result follows by induction.
\end{proof}

\subsection{The braid groups}
The braid groups $B_n, n>2$ also provide an interesting class of examples, as their spaces of left orderings are known to contain isolated points \cite{DD04}, \cite{NF07}, \cite{AC08}.   

\begin{proposition}
\label{prop:braid}
For $n>2$, the space $LO(B_n)$ contains no dense orbit.
\end{proposition}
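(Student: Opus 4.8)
The plan is to exploit the contrapositive of Proposition~\ref{prop:noiso}, together with the known fact, recalled just before the statement, that $LO(B_n)$ contains isolated points for $n>2$ (see \cite{DD04}, \cite{NF07}, \cite{AC08}). This is the only external input I would invoke; everything else is a formal consequence of the machinery already established.

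First I would translate ``dense orbit'' into an algebraic statement about $F(B_n)$. By Corollary~\ref{cor:dense1}, a cone $P \in LO(B_n)$ satisfies $\overline{Orb_{B_n}(P)} = LO(B_n)$ if and only if the projection $\pi_P : F(B_n) \rightarrow Aut(B_n, <_P)$ is injective. Thus the existence of a dense orbit in $LO(B_n)$ is \emph{equivalent} to the existence of a cone $P$ with $\pi_P$ injective, and it suffices to rule out the latter.

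Next I would argue by contradiction via Proposition~\ref{prop:noiso}: were there a cone $P \in LO(B_n)$ with $\pi_P$ injective, that proposition would force $LO(B_n)$ to contain no isolated points. But $LO(B_n)$ does contain an isolated point for $n>2$, a contradiction. Hence no $\pi_P$ is injective, and by the equivalence of the previous step no orbit $Orb_{B_n}(P)$ is dense, which is exactly the claim.

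The argument is very short, and the step I expect to be the main obstacle is simply securing the input that $LO(B_n)$ has an isolated point for $n>2$; once that is cited, the implication Corollary~\ref{cor:dense1} $\Rightarrow$ Proposition~\ref{prop:noiso} is purely formal. I would note an alternative route that avoids quoting isolated points: one could instead try to produce, for each $P$, a cofinal element (the central element $\Delta^2$ being the natural candidate) and then apply Proposition~\ref{prop:cof} to conclude $P^{-1} \notin \overline{Orb_{B_n}(P)}$, hence that no orbit is dense. However, verifying cofinality in \emph{every} left ordering of $B_n$ is considerably more delicate than quoting the existence of isolated points, so I would favor the first route and relegate the cofinality idea to a remark.
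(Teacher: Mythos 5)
Your proof is correct and is essentially the paper's own argument: the paper likewise deduces the result immediately from Proposition~\ref{prop:noiso} (via Corollary~\ref{cor:dense1}) combined with the existence of isolated points in $LO(B_n)$. Even your remarked alternative---cofinality of $\Delta_n^2$ together with Proposition~\ref{prop:cof}---is exactly the second proof the paper sketches right after the proposition.
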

\begin{proof}
This follows immediately from Proposition \ref{prop:noiso}, in light of the fact that $LO(B_n)$ contains isolated points.
\end{proof} 

As an alternative way of proving Proposition \ref{prop:braid}, recall that the center of the braid group $B_n$, for $n>1$, is infinite cyclic with generator $\Delta_n^2$.  Here, 
$$\Delta_k := (\s_{k-1} \s_{k-2} \cdots \s_{1})(\s_{k-1} \s_{k-2} \cdots \s_{2}) \cdots (\s_{k-1}\s_{k-2})(\s_{k-1})$$
is the Garside half-twist.  It is well known that $\Delta_n^2$ is cofinal in any left ordering of $B_n$ \cite{DDRW08}, from which is follows by Proposition \ref{prop:cof} that $LO(B_n)$ contains no dense orbit.

\textbf{Acknowledgments.} The author would like to thank Crist\'{o}bal Rivas and Dale Rolfsen for much help with earlier drafts of the manuscript.

\bibliographystyle{plain}
\bibliography{candidacy}

\end{document}